%%%%%%%%%%%%%%%%%%%%%%%%%%%%%%%%%%%%%%%% 
%LOG-file
%17.6 WH uploading to ArXiv (after corrections from Theo and Pavel mainly in the proof of 3.7)
% 8.6 WH editing proof of 3.7
% 4.6 TZ sent the file to WH and PZ
%27.5 WH performing more corrections according to emails from 19.5.13 and after discussion on Sunday 26.5 with PZ
%20.5 WH performing corrections according to emails from 19.5.13
%15.5 TZ sent the file
%%%%%%%%%%%%%%%%%%%%%%%%%%%%%%%%%%%%%%%%

\documentclass[a4paper,reqno]{amsart}
\usepackage{enumerate}
\usepackage{amssymb} 
\usepackage[colorlinks=true,urlcolor=red,citecolor=black,linkcolor=blue,pdfstartview=FitH]{hyperref}
\usepackage{indentfirst}
\usepackage[all]{xy}

\newtheorem{bigthm}{Theorem}

\newtheorem{thm}{Theorem}[section]
\newtheorem{lemma}[thm]{Lemma}
\newtheorem{prop}[thm]{Proposition}
\newtheorem{cor}[thm]{Corollary}
\newtheorem{example}[thm]{Example}
\newtheorem{rmk}[thm]{Remark}     %wh 23.5.

\numberwithin{equation}{section}

%macros
\newcommand{\ignore}[1]{} %saves typing millions of '%'.

%hyphenation
\tolerance=1
\emergencystretch=\maxdimen
\hyphenpenalty=10000
\hbadness=10000

\begin{document}

%\linenumbers %to make \usepackage{lineno} works!

\title[Subgroups of pro-$p$ groups acting on pro-$p$ trees]
{Splitting theorems for pro-$p$ groups acting on pro-$p$ trees and
2-generated pro-$p$ subgroups of free pro-$p$ products with procyclic amalgamations}
\author{Wolfgang Herfort}
\address{University of Technology at Vienna, Austria}
\email{w.herfort@tuwien.ac.at}
\author{Pavel Zalesskii}
\address{Department of Mathematics, University of Brasilia, Brazil}
\email{pz@mat.unb.br}
\author{Theo Zapata}
\address{Department of Mathematics, University of Brasilia, Brazil}
\email{zapata@mat.unb.br}

%\dedicatory{}

\date{\today}
\address{\hfill\upshape\today.}
\subjclass[2010]{Primary 20E06; Secondary 20E18, 20E08}
\keywords{Free products with amalgamation, HNN-extensions, pro-$p$ groups acting on pro-$p$ trees}

\maketitle

\vspace{-7pt}

\begin{quote}
\footnotesize
\textsc{Abstract.}
Let $G$ be an infinite finitely generated
pro-$p$ group acting on a pro-$p$ tree 
such that the restriction of the action to some open subgroup is free.
We prove that $G$ splits over an edge stabilizer
either as an amalgamated free pro-$p$ product or
as a pro-$p$ $\operatorname{HNN}$-extension.
Using this result we prove under a certain condition that
free pro-$p$ products with procyclic amalgamation 
inherit from its amalgamated free factors
the property of each $2$-generated pro-$p$ subgroup being free pro-$p$.
This generalizes known pro-$p$ results, as well as some pro-$p$
analogues of classical results in abstract combinatorial group theory.
\end{quote}

\vspace{21pt}

\begingroup
%\nolinenumbers
%\tableofcontents
\endgroup

%%%%%%%%%%%%%%%%%%%%%%%%%%%%%%
\phantomsection
\section{Introduction}
\label{s:intro}
%%%%%%%%%%%%%%%%%%%%%%%%%%%%%%

The main theorem of the Bass-Serre theory of groups acting on
trees states that a group $G$ acting on a tree $T$ is the
fundamental group of a connected graph of groups
whose vertex and edge groups are the stabilizers of 
certain vertices and edges of $T$.
This tells that $G$ can be obtained by successively forming
amalgamated free products and $\operatorname{HNN}$-extensions.
The pro-$p$ version of this theorem does not hold in general
(\textit{cf.} \hyperref[ex:4.3]{Example~\ref{ex:4.3}}),
namely a pro-$p$ group acting on a pro-$p$ tree does not have
to be isomorphic to the fundamental pro-$p$ group of
a profinite connected graph of finite $p$-groups 
(coming from the stabilizers).
Moreover, the fundamental pro-$p$ group of a profinite graph
of pro-$p$ groups does not have to split over some edge stabilizer
as an amalgamated free pro-$p$ product or
as a pro-$p$ $\operatorname{HNN}$-extension 
(the reason is that by deleting an edge
of a profinite graph one may destroy its compactness).
These two facts are usually the major obstacles for proving
subgroup theorems of free constructions in the category of
pro-$p$ groups.

We show that the two Bass-Serre theory principal results
mentioned above hold for infinite finitely generated
pro-$p$ groups acting \emph{virtually freely} on pro-$p$ trees, 
\textit{i.e.} such that the restriction of the action 
to some open subgroup is free.
Such a group is then virtually free pro-$p$. 

\begin{bigthm}\label{t:treeacting_intro}
Let $G$ be an infinite finitely generated pro-$p$ group acting
virtually freely on a pro-$p$ tree $T$.
Then:

\begin{enumerate}
  \item[\textup{(a)}] $G$ splits over some edge stabilizer either as
  an amalgamated free pro-$p$ product or as
  a pro-$p$ $\operatorname{HNN}$-extension;

 \item[\textup{(b)}] $G$ is isomorphic to 
the fundamental pro-$p$ group 
of a finite connected graph of finite $p$-groups
whose edge and vertex groups are isomorphic to
the stabilizers of some edges and vertices of $T$.
\end{enumerate}
\end{bigthm}

This theorem is a pro-$p$ analogue of the description of
finitely generated virtually free discrete groups proved by
Karrass, Pietrovski and Solitar \cite[Thm.~1]{KPS:73}.
In the characterization of discrete virtually free groups
Stallings' theory of ends played a crucial role.
In fact the proof of the theorem of Karrass, Pietrovski and Solitar uses
a celebrated theorem of Stallings \cite[4.1, p.~127]{Stallings:70},
according to which
every finitely generated group
with more than one end 
splits over a finite group either 
as an amalgamated free product or 
as an $\operatorname{HNN}$-extension.
Note that a theory of ends has not been developed in the pro-$p$ situation,
although it has been initiated by Korenev \cite{Korenev:04}.

We prove
\hyperref[t:treeacting_intro]{Theorem~\ref{t:treeacting_intro}}
using purely combinatorial pro-$p$ group methods.
We should also say that in contrast to the classical theorem from
Bass-Serre theory the finite graph 
in item (b) is not $G\backslash T$.
Our finite graph 
is constructed in a special way by first modifying $T$ 
without loosing the essential information of the action
(\textit{cf.} \hyperref[l:tree_tech]{Lemma~\ref{l:tree_tech}}).

%\newpage
As a corollary of 
\hyperref[t:treeacting_intro]{Theorem~\ref{t:treeacting_intro}}
we deduce the following subgroup theorem.

\begin{bigthm}\label{t-subgrouptheorem_intro}
Let $G$ be the
fundamental pro-$p$ group of a 
finite connected graph of finite $p$-groups.
If $H$ is any finitely generated pro-$p$ subgroup of $G$,
then $H$ is the fundamental pro-$p$ group of
a finite connected graph of finite $p$-groups which are
intersections of $H$ with some conjugates of 
vertex or edge groups of $G$.
\end{bigthm}

Moreover, as an application of
\hyperref[t:treeacting_intro]{Theorem~\ref{t:treeacting_intro}}
we obtain the following result.

\begin{bigthm} \label{t:freeorabelian_intro}
Let $G=A\amalg_{C} B$ be a free pro-$p$ product of
$A$ and $B$ with procyclic amalgamating subgroup $C$.
Suppose that
the centralizer in $G$ of 
$C$ is a free abelian pro-$p$ group and contains $C$ as a direct factor.
If each $2$-generated pro-$p$ subgroup of $A$
and each $2$-generated pro-$p$ subgroup of $B$
is either a free pro-$p$ group or a free abelian pro-$p$ group
then so is each $2$-generated pro-$p$ subgroup of $G$.
\end{bigthm}

%\newpage
This is a pro-$p$ version of a fundamental classical result of
G. Baumslag \cite[Thm.~2]{Baumslag:62}.
Note that our theorem also generalizes the pro-$p$ version of
a result of B. Baumslag
\cite[p.~601]{BBaumslag:68}
for free products with cyclic amalgamations 
whose amalgamating subgroups are malnormal in both factors, 
and also
a recent result 
of Kochloukova and Zalesskii
\cite[Thm.~7.3]{KZ:11}.
A simple example
not covered by previous results in the literature
is illustrated in
\hyperref[ex:demushkin]{Example~\ref{ex:demushkin}}.

To prove 
\hyperref[t:freeorabelian_intro]{Theorem~\ref{t:freeorabelian_intro}}
we consider the standard pro-$p$ tree $T$
on which $G$ acts naturally; 
then, for any $2$-generated pro-$p$ subgroup $L$ of $G$, 
we decompose the pair $(L, T)$ as an
inverse limit of pairs $(L_n,T_n)$ satisfying the hypothesis of
\hyperref[t:treeacting_intro]{Theorem~\ref{t:treeacting_intro}}.

%%%%%%%%%%%%%%%%%%%%%%%%%%%%%%
\medskip \bigskip

\noindent \textbf{Notation.} \label{notation}
Throughout this paper, $p$ is a fixed but arbitrary prime number.
The additive group of the ring of $p$-adic integers is $\mathbb{Z}_p$;
the natural numbers, $\mathbb{N}$.
The cardinal number of a finite set $X$ is denoted by $|X|$.

For any elements $x$ and $y$ in a pro-$p$ group $G$
we shall write $y^x \!:= x^{-1} y x$.
Unless otherwise noted,
all groups are pro-$p$, subgroups are closed, and
maps are continuous. 
For a subset $A$ of $G$ we denote by $\langle A\rangle$ 
the subgroup of $G$ (topologically) generated by $A$
and by $A^G$ the normal closure of $A$ in $G$, \textit{i.e.},
the smallest closed normal subgroup of $G$ containing $A$.
By $d(G)$ we denote the smallest cardinality of a
generating subset of $G$.
The Frattini subgroup of $G$ will be denoted by $\Phi(G)$.
By $\operatorname{tor}(G)$ we mean the set
of all torsion elements of $G$.
If a pro-$p$ group $H$ is isomorphic to $G$, 
then we write $H\cong G$.

For a pro-$p$ group $G$ acting continuously
on a space $\Omega$ we denote
the set of all points of $\Omega$ fixed under $G$ by $\Omega^G$,
and for each $x$ in $\Omega$ the {point stabilizer} by $G_x$.
\emph{We define
$\widetilde{G}:=\langle{ G_x\mid x\in \Omega}\rangle$}.
The orbit set is designated
by $G\backslash\Omega$,
since group actions are assumed to be left actions.

The empty set $\emptyset$
is a profinite graph which is not connected,
in particular, it is not a pro-$p$ tree.
%A profinite graph other than a single vertex is called
%a \emph{non-trivial} profinite graph.
A profinite graph is {\em non-trivial} provided it contains more than one vertex. %wh
%The inverse limit $\varprojlim_i X_i$ of compact spaces 
%is {\em strict} provided all bonding maps $X_i\to X_j$ are onto.%wh

The rest of our notation is very standard and basically follows
the works of Ribes and Zalesskii
\cite{RZ:00a}
and
\cite{RZ:10}.

%%%%%%%%%%%%%%%%%%%%%%%%%%%%%%
%\newpage
\phantomsection
\section{Preliminary Results}
\label{s:prelim}
%%%%%%%%%%%%%%%%%%%%%%%%%%%%%%

In the current section we collect properties of
amalgamated free pro-$p$ products,
pro-$p$ $\operatorname{HNN}$-extensions and
pro-$p$ groups acting on pro-$p$ trees to be used in the paper.
Further information on this subject can be found in
\cite{RZ:00a} and \cite{RZ:10}.

First, an amalgamated free pro-$p$ product
$G=A\amalg_CB$ is \emph{non-fictitious}
if $C$ is a proper subgroup of both $A$ and $B$.
Unless differently stated \emph{we shall consider exclusively
non-fictitious amalgamated free pro-$p$ products}
and we shall make use of the facts 
established by Ribes
\cite{Ribes:71} that a free pro-$p$ product
with either procyclic or finite amalgamating subgroup
is always \emph{proper}, \textit{i.e.},
the factors $A$ and $B$ embed in $G$ via the natural maps.
Second,
a pro-$p$ $\operatorname{HNN}$-extension 
$G=\operatorname{HNN}(H,A,B,f,t)$
is \emph{proper} if the natural map from $H$ to $G$ is injective.

\begin{thm} %2.1
\label{t:afpproperties}
Let $G=G_1\amalg_H G_2$ be a proper amalgamated free pro-$p$
product of pro-$p$ groups.
\begin{itemize}
\item[\textup{(a)}] \textup{(\protect{\cite[Thm.~4.2(b)]{RZ:00a}})}
Let $K$ be a finite subgroup of $G$.
Then $K\subseteq G_i^{g}$ for some $g\in G$ and for some
$i=1$ or $2$. %\in\{1,2\}

\item[\textup{(b)}] \textup{(\protect{\cite[Thm.~4.3(b)]{RZ:00a}})}
Let $g\in G$. Then

$G_i\cap G_j^{g}\subseteq H^{b}$

for some $b\in G_i$, whenever $1\le i\neq j\le 2$ or $g\not\in G_i$.
\end{itemize}
\end{thm}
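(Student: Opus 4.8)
The plan is to derive both statements from the natural action of $G$ on the \emph{standard pro-$p$ tree} $T=T(G_1,H,G_2)$ attached to the decomposition: $V(T)=G/G_1\,\sqcup\,G/G_2$, $E(T)=G/H$, $d_0(gH)=gG_1$, $d_1(gH)=gG_2$, with $G$ acting by left translation. Properness of the amalgam is precisely what guarantees that $T$ is a pro-$p$ \emph{tree} and that the maps $G_i\hookrightarrow G$ are injective. The facts about this action that I will use are: the stabilizer of a vertex $uG_i$ is $G_i^{u\inv}$ and that of an edge $uH$ is $H^{u\inv}$ — so every vertex stabilizer is a conjugate of $G_1$ or of $G_2$, and every edge stabilizer a conjugate of $H$ — and that every edge incident to the base vertex $v_i:=1\cdot G_i$ has the form $aH$ with $a\in G_i$, hence stabilizer $H^{a\inv}$ with $a\inv\in G_i$.

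Part (a) is then immediate: $K$ is a finite pro-$p$ group acting on the pro-$p$ tree $T$, so by Theorem \ref{t:trees1}(d), applied to $K$ acting on $T$, $K$ fixes some vertex $v$, i.e.\ $K\le G_v$; and $G_v$ is a conjugate of $G_1$ or of $G_2$, which is the claim.

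For part (b), set $D:=G_i\cap G_j^{g}$ and consider the two vertices $v_i:=1\cdot G_i$ and $w:=g\inv\cdot G_j$. Then $D$ fixes both, since $G_{v_i}=G_i$ and $G_w=g\inv G_j g=G_j^{g}$. I would next check that $v_i\ne w$ in each case of the hypothesis: if $i\ne j$ this holds because $v_i$ and $w$ lie in different parts of the bipartition of $V(T)$, and if $i=j$ while $g\notin G_i$ then $g\inv G_i\ne G_i$. With $v_i\ne w$ in hand, Theorem \ref{t:trees1}(c) gives that $[v_i,w]$ has at least one edge and that $D\le G_e$ for \emph{every} $e\in E([v_i,w])$. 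Finally, since $[v_i,w]$ is the geodesic between two distinct vertices, it contains an edge $e_0$ incident to $v_i$; writing $e_0=aH$ with $a\in G_i$ yields $D\le G_{e_0}=H^{b}$ with $b:=a\inv\in G_i$, as asserted.

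The routine parts are the construction of $T$ and its stabilizers together with the two case-checks $v_i\ne w$. The one step that uses the profinite (rather than discrete) nature of $T$ in an essential way is the existence of an edge of $[v_i,w]$ \emph{incident} to $v_i$, as opposed to merely lying somewhere on the geodesic — this is exactly what allows the conjugator $b$ to be pinned inside $G_i$, and in the pro-$p$ setting it belongs to the basic geometry of pro-$p$ trees. In the precise form stated, (a) and (b) are \cite[Thms.\ 4.2, 4.3]{RZ:00a}, which in the paper one simply invokes; the sketch above is how they are recovered.
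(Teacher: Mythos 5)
The paper gives no proof of this statement at all --- it is quoted verbatim from \cite[Thms.~4.2(b), 4.3(b)]{RZ:00a} --- and your sketch is a correct reconstruction of exactly the argument used there: the action on the standard pro-$p$ tree, Theorem \ref{t:trees1}(d) for part (a), and Theorem \ref{t:trees1}(c) applied to the two distinct vertices $1\cdot G_i$ and $g\inv G_j$ for part (b), with the conjugator pinned in $G_i$ because every edge of a connected profinite subgraph incident to the base vertex $1\cdot G_i$ has the form $aH$ with $a\in G_i$. Nothing to correct.
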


\begin{thm} %2.2
\label{t:hnnproperties}
Let $G=\operatorname{HNN}(H,A,t)$ be a proper
pro-$p$ $\operatorname{HNN}$-extension.
\begin{itemize}
\item[\textup{(a)}]
\textup{(\protect{\cite[Thm.~4.2(c)]{RZ:00a}})}
Let $K$ be a finite subgroup of $G$.
Then $K\subseteq H^{g}$ for some $g\in G$.

\item[\textup{(b)}]
\textup{(\textit{cf.} \protect{\cite[Thm.~4.3(c)]{RZ:00a}})}
Let  $g\in G $. Then
%\[
$H\cap H^{g}\subseteq A^{b}$
%\]
for some $b\in H\cup tH$, whenever $g\not\in H$.
\end{itemize}
\end{thm}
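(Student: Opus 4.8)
The plan is to realize $G={\rm HNN}(H,A,f)$ as a pro-$p$ group acting on its standard pro-$p$ tree $T$ and to read off both statements from the structural results of Theorem~\ref{t:trees1}. Recall that for a proper pro-$p$ {\rm HNN}-extension the vertex set of $T$ is the pro-$p$ space on $G/H$ and the edge set the pro-$p$ space on $G/A$, with a base vertex $v_0$ corresponding to $H$, a base edge corresponding to $A$, and stable letter $t$; the two endpoints of an edge $gA$ are the vertices $gH$ and $gtH$. Under the left multiplication action the stabilizer of the vertex $gH$ is $H^{g\inv}$ and that of the edge $gA$ is $A^{g\inv}$, so every vertex stabilizer is a conjugate of $H$ and every edge stabilizer a conjugate of $A$. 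Properness is exactly what guarantees that $H$ and $A$ embed, making these identifications legitimate.

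For part (a), I would restrict the $G$-action on $T$ to the finite subgroup $K$. Since $K$ is finite, Theorem~\ref{t:trees1}(d) applies to the $K$-action and produces a vertex $v=gH$ fixed by $K$, that is $K\le G_v=H^{g\inv}$. Writing $g'=g\inv$ gives $K\le H^{g'}$ for some $g'\in G$, which is the assertion.

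For part (b), suppose $g\notin H$ and set $w:=g\inv v_0$. Because $G_{v_0}=H$ and $g\notin H$ one has $g\inv\notin H$, so $w\neq v_0$, while $G_w=H^{g}$. Theorem~\ref{t:trees1}(c) then tells us that $E([v_0,w])\neq\emptyset$ and that $H\cap H^{g}=G_{v_0}\cap G_w\le G_e$ for every $e\in E([v_0,w])$. I would take $e$ to be the edge of the geodesic $[v_0,w]$ incident to $v_0$, so it only remains to check that an edge incident to $v_0=H$ has stabilizer $A^b$ with $b\in H\cup tH$. Such an edge $gA$ satisfies $gH=H$ or $gtH=H$: in the first case $g\in H$ and its stabilizer is $A^b$ with $b=g\inv\in H$; in the second case $g=ht\inv$ for some $h\in H$, so its stabilizer is $A^b$ with $b=g\inv=th\inv\in tH$. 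This yields $H\cap H^{g}\le A^b$ with $b\in H\cup tH$.

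The technical heart of the argument is the foundational Theorem~\ref{t:trees1}, whose parts (c) and (d) take over the elementary tree combinatorics available in the abstract Bass--Serre setting; granting these, the remaining work is purely the coset bookkeeping in the standard pro-$p$ tree that confines the conjugating element to $H\cup tH$ rather than to an arbitrary element of $G$. The only point demanding care is the convention $y^x=x\inv yx$ when passing between ``stabilizer of $gH$'' and the conjugate $H^{g\inv}$, together with the precise adjacency rule for the edges of $T$; once these are fixed, the identification of the incident edge stabilizers is immediate.
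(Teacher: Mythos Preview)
The paper does not give its own proof of this theorem; it is simply quoted from \cite[Thms.\ 4.2(c) and 4.3(c)]{RZ:00a}. Your argument---realizing $G$ through its action on the standard pro-$p$ tree and invoking Theorem~\ref{t:trees1}(d) for part~(a) and Theorem~\ref{t:trees1}(c) together with the identification of the stabilizer of an edge of $[v_0,w]$ incident to $v_0$ for part~(b)---is exactly the proof given in that reference, and it is correct. The only point one might flag is the existence of an edge of the geodesic $[v_0,w]$ incident to $v_0$; this is a standard fact about pro-$p$ trees (obtained, for instance, by an inverse-limit argument over finite quotient trees) and is established in \cite{RZ:00a}.
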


Next, we recollect 
fundamental results
from the theory of pro-$p$ groups acting on pro-$p$ trees
in the succeeding theorem.
We note that the two previous results are simple consequences
of it.
Recall first that
for a pro-$p$ group $G$ acting on a pro-$p$ tree $T$,
the closed subgroup generated by all vertex stabilizers
is denoted by $\widetilde{G}$; besides, 
the (unique) smallest pro-$p$ subtree of $T$ containing
two vertices $v$ and $w$ of $T$ is denoted by $[v,w]$
and called the geodesic connecting $v$ to $w$ in $T$
(\textit{cf.} \cite[p.~83]{RZ:00a}).

\begin{thm} %2.3
\label{t:trees1}
Let $G$ be a pro-$p$ group acting on a pro-$p$ tree $T$.
\begin{itemize}
 \item[\textup{(a)}] \textup{(\protect{\cite[Prop.~3.5]{RZ:00a}})}
$\widetilde{G}\backslash T$ is a pro-$p$ tree.

 \item[\textup{(b)}] \textup{(\protect{\cite[Cor.~3.6]{RZ:00a}})}
$G/\widetilde{G}$ is a free pro-$p$ group.

 \item[\textup{(c)}] \textup{(\protect{\cite[Cor.~3.8]{RZ:00a}})}
If $v$ and $w$ are two different vertices of $T$,
then $E([v,w])\neq\emptyset$ and
$(G_v\cap G_w)\subseteq G_e$ for every $e\in E([v,w])$.

 \item[\textup{(d)}] \textup{(\protect{\cite[Thm.~3.9]{RZ:00a}})}
If $G$ is finite, then $G=G_v$ for some $v\in V(T)$.
 \end{itemize}
\end{thm}

%\newpage
Now we quote three results to be referred to in
\hyperref[s:virt_freely]{Section~\ref{s:virt_freely}}.
%only.

%\newpage
\begin{prop}[\protect{\textnormal{\textit{cf.}}
\cite[Thm.~5.6]{Melnikov:90}, \cite[Thm.~3.6]{Zalesskii:96}}] %2.4
\label{p:freeedgeaction}
Let $G$ be a pro-$p$ group acting on a pro-$p$ tree $T$
with trivial edge stabilizers.
If there exists a continuous section
$\sigma\colon G\backslash V(T)\longrightarrow V(T)$,
then $G$ is isomorphic to the free pro-$p$ product
\[
\left(\coprod_{\dot w\in G\backslash V(T)}G_{\sigma(\dot w)}\right)
\amalg \left(G/\langle{G_v\mid v\in V(T)}\rangle\right) \, .
\]
\end{prop}

%\newpage
\begin{prop}[\protect{\cite[Thm.~1.1]{Scheiderer:99}}] %2.5
\label{p:scheiderer}
Let $G$ be a finitely generated pro-$p$ group which
contains an open free pro-$p$ subgroup of index $p$.
Then $G$ is isomorphic to a free pro-$p$ product
\[
F_0\amalg(C_1\times F_1)\amalg\cdots\amalg(C_m\times F_m)
\]
where $m\geq 0$, 
the $F_i$ are free pro-$p$ groups of finite rank
and
the $C_i$ are cyclic groups of order $p$.
\end{prop}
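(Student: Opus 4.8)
The plan is to exploit the arithmetic of the index $p$ in order to reduce to the structure of $G$ as the fundamental pro-$p$ group of a finite graph of finite $p$-groups, and then to simplify that graph of groups until only factors of the stated shape remain. I begin with routine reductions. An open subgroup of index $p$ in a pro-$p$ group is normal and is the kernel of an epimorphism onto ${\mathbb Z}/p$; hence the given open free pro-$p$ subgroup $F$ satisfies $F\triangleleft G$ and $G/F\cong{\mathbb Z}/p$, and $F$, being open in the finitely generated group $G$, is itself finitely generated, hence free pro-$p$ of finite rank. If $G$ is torsion free, then $\operatorname{cd}_p G=\operatorname{cd}_p F\le 1$ by Serre's theorem on cohomological dimension, so $G$ is free pro-$p$ and the conclusion holds with $m=0$. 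Otherwise $G$ contains an element of order $p$; since $F$ is torsion free of index $p$, every nontrivial finite subgroup of $G$ has order exactly $p$, meets $F$ trivially, and is therefore a complement to $F$. Fix such a subgroup $S$, so that $G=F\rtimes S$.

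The next step is the one I expect to be the main obstacle: to realize $G$ as the fundamental pro-$p$ group $\Pi_1(\cG,\Gamma)$ of a finite connected graph $\Gamma$ of finite $p$-groups, equivalently to make $G$ act on a pro-$p$ tree with finite vertex and edge stabilizers, finite quotient, and with $F$ acting freely. This belongs to the structure theory of virtually free pro-$p$ groups and cannot be obtained here by invoking Theorem~\ref{t-treeacting_intro}(b), since the latter rests on the present proposition; a direct argument is needed. A natural one is induction on the rank of $F$: regard $\overline F\!:=F/\Phi(F)$ as an ${\mathbb F}_p[S]$-module, decompose it into indecomposable summands, and split off from $G$ a free pro-$p$ product factor matching one such summand, thus lowering the rank. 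The delicate point is that an $S$-invariant decomposition of $\overline F$ need not lift to an $S$-invariant free pro-$p$ product decomposition of $F$, and the obstruction is genuine: already for $F$ of rank $1$, $p=2$, and $S$ acting by inversion, $G$ is the pro-$2$ infinite dihedral group ${\mathbb Z}/2\amalg{\mathbb Z}/2$, which does split as a free product even though $F={\mathbb Z}_2$ has no proper $S$-invariant free factorization. Whatever argument is carried out, once $G=\Pi_1(\cG,\Gamma)$ is available, each vertex group and each edge group of $\cG$ is a finite subgroup of $G$, hence of order $1$ or $p$ by the reductions above.

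Finally I would simplify the graph of groups. Whenever an edge $e$ has trivial edge group, split $G$ over $e$: this presents $G$ as a free pro-$p$ product of the fundamental pro-$p$ groups of the two pieces if $e$ separates $\Gamma$, and as a free pro-$p$ product of the fundamental pro-$p$ group of $\Gamma$ with $e$ removed and an infinite procyclic group if it does not. Performing this for all such edges, and absorbing the resulting procyclic factors together with any isolated trivial vertex groups into a single free pro-$p$ group $H_0$, one obtains $G\cong H_0\amalg G_1\amalg\cdots\amalg G_m$, where each $G_i=\Pi_1(\cG_i,\Gamma_i)$ with $\Gamma_i$ finite connected and with every vertex group and every edge group of $\cG_i$ equal to ${\mathbb Z}/p$ and every structural monomorphism an isomorphism. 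In such a $G_i$ all the vertex groups coincide, as subgroups, with one fixed copy $S_i\cong{\mathbb Z}/p$, and each stable letter normalizes $S_i$, acting on it through a continuous homomorphism from a procyclic pro-$p$ group into $\operatorname{Aut}({\mathbb Z}/p)\cong{\mathbb Z}/(p-1)$, hence trivially; therefore $S_i$ is central in $G_i$. Consequently $G_i/S_i$ is free pro-$p$ (it is the fundamental pro-$p$ group of $\Gamma_i$ with all groups collapsed), the central extension $1\to S_i\to G_i\to G_i/S_i\to 1$ splits because $\operatorname{cd}_p(G_i/S_i)\le 1$, and, being split and central, it is a direct product: $G_i\cong S_i\times H_i$ with $H_i\!:=G_i/S_i$ free pro-$p$, necessarily of finite rank since $G$ is finitely generated. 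Hence $G\cong H_0\amalg(S_1\times H_1)\amalg\cdots\amalg(S_m\times H_m)$, which is the assertion.
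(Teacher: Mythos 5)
The paper does not prove this statement at all: it is imported verbatim from \cite[Thm.\ 1.1]{Scheiderer:99} and used as a black box (it feeds into Lemma \ref{l-rankformula}, and its Corollary \ref{c:finiteconjugacy} into Lemma \ref{l-treetech}). Your proposal therefore has to stand on its own, and it does not: the step you yourself flag as the main obstacle --- realizing $G=F\rtimes S$ as the fundamental pro-$p$ group $\Pi_1(\cG,\Gamma)$ of a \emph{finite} graph of \emph{finite} $p$-groups --- is never carried out. You correctly note that invoking Theorem \ref{t-treeacting_intro}(b) here would be circular (its proof rests on this very proposition), you propose an induction on ${\rm rank}(F)$ via the ${\mathbb F}_p[S]$-module $F/\Phi(F)$, and you then exhibit a case (the pro-$2$ infinite dihedral group) showing that the inductive step of that scheme fails, before continuing with ``whatever argument is carried out, once $G=\Pi_1(\cG,\Gamma)$ is available\dots''. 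That clause is the whole theorem. Everything before it (the reduction to $G=F\rtimes S$ with $|S|=p$, and the torsion-free case via Serre's theorem on ${\rm cd}_p$) and everything after it (splitting a finite graph of groups over its trivial edge groups, observing that in each remaining component all vertex and edge groups coincide with a single central copy $S_i$ of ${\mathbb Z}/p$ because $\mathrm{Aut}({\mathbb Z}/p)$ has order prime to $p$, and splitting the central extension $1\to S_i\to G_i\to G_i/S_i\to 1$ using ${\rm cd}_p(G_i/S_i)\le 1$) is correct and comparatively routine. The hard content of Scheiderer's theorem is precisely the existence of the finite graph-of-groups decomposition with finite vertex groups --- equivalently, of a suitable action on a pro-$p$ tree --- and that is exactly what is missing.

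To repair this you must either do as the paper does and cite \cite{Scheiderer:99} for the whole statement (in which case your write-up proves nothing new), or supply the missing decomposition by an independent argument, e.g.\ the cohomological analysis of Scheiderer's original proof or the structure theory of order-$p$ automorphisms of free pro-$p$ groups; neither is a formality, and your own dihedral example shows why the naive module-theoretic induction cannot be the route.
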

%cited in 

\begin{cor}[\protect{\cite[Cor.~1.3(a)]{Scheiderer:99}}] %2.6
\label{c:finiteconjugacy}
Every pro-$p$ group which contains an open
free pro-$p$ subgroup of finite rank
has, up to conjugation, only a finite number
of finite subgroups.
\end{cor}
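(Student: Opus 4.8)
The plan is to induct on $n$, where $p^n=[G:F]$ and $F$ is a chosen open free pro-$p$ subgroup of finite rank. First I would replace $F$ by the intersection of its finitely many conjugates, which is again open, normal, free pro-$p$ and of finite rank, so that $F\triangleleft G$. Since $F$ is torsion-free, every finite subgroup $K\le G$ meets $F$ trivially and hence embeds into $G/F$; in particular $|K|\le p^n$, and it suffices to bound the number of \emph{conjugacy classes} of finite subgroups of $G$ (a finite $p$-group having only finitely many subgroups). The cases $n=0$ (so $G=F$ is free) and $G$ finite are trivial, so assume $n\ge 1$ and $G$ infinite. For the base case $n=1$ I would quote Proposition \ref{p:scheiderer}: in $G\cong H_0\amalg(S_1\times H_1)\amalg\cdots\amalg(S_m\times H_m)$ every finite subgroup is conjugate into a free factor by Theorem \ref{t:afpproperties}(a) (applied repeatedly), $H_0$ is torsion-free, and a finite subgroup of $S_i\times H_i$ lies inside $S_i\times 1$ because $H_i$ is torsion-free; so $1,S_1,\dots,S_m$ exhaust the finite subgroups of $G$ up to conjugacy.

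For $n\ge 2$ I would split according to a finite subgroup $K$. If $KF\ne G$, then $KF$ is one of the finitely many proper subgroups of $G$ containing $F$ (these correspond to the proper subgroups of the finite $p$-group $G/F$), each of which carries $F$ as an open normal free pro-$p$ subgroup of index $<p^n$; applying the induction hypothesis to each of them shows that all finite subgroups $K$ with $KF\ne G$ lie in finitely many $G$-conjugacy classes. So the crux is the complements, i.e.\ the finite $K$ with $KF=G$ — then $K\cap F=1$ and $|K|=p^n$.

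To handle a complement $K$, I would pick a central subgroup $Z\le Z(K)$ of order $p$ and put $L:=ZF$, which is normal in $G$ (since $Z\triangleleft K$ and $F\triangleleft G$), with $[L:F]=p$ and $[G:L]=p^{n-1}$. Proposition \ref{p:scheiderer} applied to $L$, as in the base case, shows that the order-$p$ subgroups of $L$ form finitely many $L$-conjugacy classes, represented by $Z_1,\dots,Z_s$ say. A short index count ($K\cap L$ embeds into $L/F$, so $|K\cap L|\le p$, while $[K:K\cap L]=[KL:L]=[G:L]=p^{n-1}$) gives $|K\cap L|=p$, so $K\cap L$ is $L$-conjugate to some $Z_j$; after replacing $K$ by an $L$-conjugate (also a $G$-conjugate) I may assume $K\cap L=Z_j$, whence $Z_j\triangleleft K$ and $K\le P:=N_G(Z_j)$. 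Now I would use Theorem \ref{t:afpproperties}(b) inside the free-product decomposition of $L$ to see that a nontrivial element centralising $Z_j$ must lie in the free factor of $L$ meeting $Z_j$ nontrivially, so that $C_L(Z_j)$ is (a conjugate of) some $S_i\times H_i$; consequently $C_F(Z_j)=F\cap P$ is a \emph{finitely generated} free pro-$p$ group, $[P:C_F(Z_j)]=[KF:F]=p^n$, and $K$ is a complement to $C_F(Z_j)$ in $P$ containing $Z_j$. Passing to $\bar G:=P/Z_j$, the image $\bar F_j$ of $C_F(Z_j)$ is an open normal free pro-$p$ subgroup of $\bar G$ of finite rank and of index $p^{n-1}<p^n$, so by the induction hypothesis $\bar G$ has finitely many conjugacy classes of finite subgroups. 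Finally $K\mapsto K/Z_j$ injects the complements with $K\cap L=Z_j$, modulo $P$-conjugacy, into the finite subgroups of $\bar G$ modulo conjugacy (it is injective because $K$ is the full preimage of $K/Z_j$ under $\pi\colon P\to\bar G$, since $Z_j\le K$, and conjugacy is reflected both ways since $Z_j\triangleleft P$); so for each $j$ these $K$ fall into finitely many $G$-conjugacy classes, and summing over $j$ finishes the complement case and the induction.

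I expect the last step to be the real obstacle. A complement to $F$ is never conjugate into a proper subgroup of $G$ containing $F$ (a conjugate of a complement is again a complement, yet generates $G$ together with $F$), so the ``cheap'' part of the induction genuinely cannot reach it, and one needs structural input — Scheiderer's Proposition for the available torsion, plus Theorem \ref{t:afpproperties}(b) to control $C_F(Z_j)$ — in order to pass from $(G,F)$ to the strictly smaller pair $(\bar G,\bar F_j)$. Most of the work will be the verifications around $C_F(Z_j)$: that it is free of finite rank, that $K$ becomes a complement in $P$, and that $G$-conjugacy of complements is faithfully mirrored after dividing out $Z_j$.
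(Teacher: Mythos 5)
The paper does not prove this statement at all: it is imported verbatim from \cite[Cor.~1.3(a)]{Scheiderer:99}, so there is no in-house argument to compare yours against; your proposal is a self-contained derivation from the one structural input the paper does quote, namely Proposition \ref{p:scheiderer}, and as far as I can check it is correct. The induction on $[G:F]$, the dichotomy between finite $K$ with $KF\neq G$ (absorbed by the induction hypothesis applied to the finitely many intermediate subgroups) and complements, and the descent to $P/Z_j$ all work. Two points want tightening. First, your $L=ZF$ is built from the particular complement $K$ under consideration, so a priori it varies with $K$; either observe that there are only finitely many candidates (subgroups between $F$ and $G$ of index $p$ over $F$), or, cleaner, fix once and for all one $L$ with $F\le L\le G$, $[L:F]=p$ and $L/F$ central in $G/F$ --- then for \emph{every} complement $K$ your index count gives $|K\cap L|=p$ and $K\cap L\triangleleft K$ is automatically central in $K$, so a single $L$ serves all complements. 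Second, the claim that $C_F(Z_j)=F\cap(S_j\times H_j)$ is a finitely generated free pro-$p$ group needs its two halves argued separately: it is free because it is a \emph{closed} subgroup of the free pro-$p$ group $F$ (it is in general not open in $F$, since the free factor $S_j\times H_j$ has infinite index in $L$, so the Nielsen--Schreier index formula inside $F$ is not available), and it has finite rank because it has index $p$ in the finitely generated group $S_j\times H_j$. With those repairs the argument stands, using only results the paper already imports.
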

%cited in 

The definition of the fundamental pro-$p$ group
of a connected profinite graph of pro-$p$ groups
is quite involved
(see \cite[1.7 and 2.1]{ZM:90}).
However, the fundamental pro-$p$ group
$\Pi_1(\mathcal{G}, \Gamma)$
of a finite connected graph of finitely generated pro-$p$ groups
$(\mathcal{G}, \Gamma)$
can be defined as the pro-$p$ completion
of the abstract (usual) fundamental group
$\Pi_1^{abs}(\mathcal{G},\Gamma)$,
by using the fact that
every subgroup of finite index
in a finitely generated pro-$p$ group is open.
We shall need only this case throughout the paper.
Thus $\Pi_1(\mathcal{G}, \Gamma)$
has the following pro-$p$ presentation.
\begin{center}
\begin{tabular}{l l l}
Generators:
&generators of $\mathcal{G}(v)$,
&$v\in V(\Gamma)$\\
&$t_e$,
&$e\in E(\Gamma)$\\
Relations:
&relations of $\mathcal{G}(v)$,
&$v\in V(\Gamma)$\\
&$\partial_{0,e}(g)=t_e\partial_{1,e}(g)t_e^{-1}$,
 \ for all $g\in \mathcal{G}(e)$,
&$e\in E(\Gamma)$\\
&$t_e=1$,
&$e\in E(T)$
\end{tabular}
\end{center}
where,
the generators and relations of each
vertex group $\mathcal{G}(v)$ are taken from any chosen
pro-$p$ group presentation of $\mathcal{G}(v)$,
the letters $t_e$ are different from
the generators of all vertex groups,
the maps
$\partial_{0,e}\colon\mathcal{G}(e)\to \mathcal{G}(d_0(e))$ and
$\partial_{1,e}\colon\mathcal{G}(e)\to \mathcal{G}(d_1(e))$
are the given monomorphisms
from each edge group to its initial and terminal vertex group,
and
$T$ is any chosen maximal subtree of $\Gamma$. 

%%%%%%%%%%%%%%%%%%%%%%%%%%%%%%
%\newpage
\phantomsection
\section{Groups acting virtually freely on trees}
\label{s:virt_freely}
%%%%%%%%%%%%%%%%%%%%%%%%%%%%%%

The goal of the present section is to establish Theorems 
\hyperref[t:treeacting_intro]{\ref{t:treeacting_intro}}
and
\hyperref[t-subgrouptheorem_intro]{\ref{t-subgrouptheorem_intro}}.

\begin{lemma} %3.1
\label{l:actionfree}
%\small
Let $G$ be a finitely generated pro-$p$ group
acting on a connected profinite graph $\Gamma$.
Suppose there exists a connected profinite subgraph
$\Delta$ of $\Gamma$ such that $G\Delta=\Gamma$.
Then there exists a generating set $S$ of $G$
such that $|S|=d(G)$ and
$\Delta\cap s\Delta\neq\emptyset$ for each $s$ in $S$.
\end{lemma}
%cited in 3.2

\begin{proof}
It is enough to prove the lemma under the
additional assumption that $G$ is non-trivial elementary abelian.
Indeed, using ``bar'' to denote passing to the quotients modulo
the Frattini $\Phi(G)$,
by assumption there exists 
a subset $Z$ of $G$ such that
$\overline{Z}$ is a generating set of $\overline{G}$,
$|Z|=d(\overline{G})=d(G)$, %=|\overline{Z}|
and with the property that
$\overline{\Delta}\cap\overline{z}\overline{\Delta}\neq\emptyset$
for each $z$ in $Z$.
Then there exist $f_z$ in $\Phi(G)$ such that
$\{f_z z\mid z\in Z\}$ is a desired set
$S$ of generators of $G$.
Of course, if $G$ is trivial then we take $S=\emptyset$
and the lemma is vacuously true.

So henceforth $G$ is a non-trivial finite elementary abelian group.
We proceed by induction on $d(G)$.
Suppose first $d(G)=1$.
Since the union $\bigcup_{g\in G}g\Delta$ cannot be disjoint
(because $\Gamma$ and each $g\Delta$ is connected),
there exist two distinct elements $g_1$ and $g_2$ in $G$
such that $g_1\Delta\cap g_2\Delta\neq\emptyset$.
Taking $S$ as $\{g_1^{-1}g_2\}$,
the conclusion of the lemma holds.
Suppose now $d(G)\ge2$.
Pick any element $t$ in $G-\Phi(G)$, and %\smallsetminus
let ``bar'' denote passing to the quotient modulo $\langle t\rangle$.
By induction, there is a subset $Y$ of $G$ such that
$\overline{Y}$ is a generating set of $\overline{G}$,
$|Y|=d(\overline{G})=d(G)-1$,
and for each $y$ in $Y$ we have
$\Delta\cap t_y y\Delta\neq\emptyset$
for some element $t_y$ in $\langle t\rangle$.
Set $W=\{t_y y\mid y\in Y\}$.
Since $\langle W\rangle\Delta$ is connected
and $\langle t\rangle\langle W\rangle\Delta=\Gamma$,
by the basis of induction
there exists $t'$ in $\langle t\rangle$
such that
$\langle W\rangle\Delta\cap t'\langle W\rangle\Delta\neq\emptyset$.
This means that there exist $h_1$ and $h_2$ in $\langle W\rangle$
such that $h_1\Delta \cap t'h_2\Delta\neq \emptyset$,
so $S$ equals $W\cup\{h_1^{-1}t'h_2\}$ satisfies the assertion
of the lemma.
\end{proof}

%\newpage
Recall our ``tilde'' notation for
the closed subgroup generated by all stabilizers.

\begin{lemma} %3.2
\label{l:action}
Let $G$ be a finitely generated pro-$p$ group
acting on a pro-$p$ tree $T$.
Suppose there exists a pro-$p$ subtree $D$ of $T$
such that $GD=T$.
Then there exists a subset $X$ of $G$ which is a free generating set
of a retract of $G/\widetilde{G}$ in $G$ such that
$|X|=d(G/\widetilde{G})$ and
$D\cap xD\neq\emptyset$ for each $x$ in $X$.
\end{lemma}
%cited in 3.3

\begin{proof}
Let ``bar'' denote passing to the quotient modulo $\widetilde G$.
By
\hyperref[t:trees1]{Theorem~\ref{t:trees1}(a)}
the quotient graph  $\overline{T}$ is a pro-$p$ tree.
Applying
\hyperref[l:actionfree]{Lemma~\ref{l:actionfree}}
to $\overline{G}$ acting on $\overline{T}$
yields a subset $S$ of $G$ such that
$\overline{S}$ is a generating set of $\overline{G}$,
$|S|=d(\overline{G})$, and
for each $s$ in $S$ we have $D\cap k_s sD\neq\emptyset$
for certain $k_s$ in $\widetilde G$.
Set $X=\{k_s s\mid s\in S\}$. % and $H=\langle X\rangle$.
Finally observe that by
\hyperref[t:trees1]{Theorem~\ref{t:trees1}(b)},
$\overline{G}$ is a free pro-$p$ group, so
$X$ freely generates a retract.
\end{proof}

%\newpage
In a connected non-trivial profinite graph,
every vertex is the initial or terminal vertex of some edge,
provided the set of edges is compact
(\textit{cf.} \cite[Lemma 2.14]{ZM:89}).
The following analogue result concerns stabilizers.
For the purposes of this paper,
it suffices us to endow
the set of all closed subgroups of a profinite group $G$
with a topology such that:
$G$ acts continuously on it by conjugation;
and, the set of all closed subgroups contained
in a given closed subgroup of $G$ is closed in it.
If $G=\varprojlim G_i$ with finite discrete groups $G_i$,
then it is natural and sufficient
to consider the topology of the projective limit
of the finite discrete sets of all closed subgroups of $G_i$.

\begin{lemma} %NEW3.3
\label{l:edges-vertices}
Let $G$ be a profinite group
acting on a connected non-trivial profinite graph.
Suppose that the set of all edge stabilizers is compact
in the space of all closed subgroups of $G$.
Then
every vertex stabilizer contains an edge stabilizer.
\end{lemma}
%cited in 3.4

\begin{proof}
Let $G$ act on a connected non-trivial profinite graph $\Gamma$.
Let $\mathcal K$ denote the set of all edge stabilizers.
Write $\Gamma=\varprojlim_{i\in I}\Gamma_i$,
where $I$ is a right directed ordered set,
$\Gamma_i$ are finite (connected) non-trivial quotient graphs of $\Gamma$,
$G$ acts on each $\Gamma_i$,
and each projection $\Gamma\to\Gamma_i$ is a
$G$-morphism of profinite graphs
(\textit{cf.} \cite[Lemma~5.6.4(a)]{RZ:10}).
Let $v\in V(\Gamma)$.
For each $i$ in $I$,
let $v_i$ be the projection of $v$ in $\Gamma_i$ and
consider the set $\Upsilon_i$
of all stabilizers of edges of $\Gamma$ contained in $G_{v_i}$.
Evidently,
$\{\Upsilon_i\}_{i\in I}$ is a 
family of non-empty closed subsets
in $\mathcal K$ 
with the finite intersection property.
Indeed: first, since $|\Gamma_i|>1$,
there exists an edge having initial or terminal vertex $v_i$
and with preimage in $E(\Gamma)$,
so $\Upsilon_i\neq\emptyset$;
second, since $G_{v_i}$ is closed in $G$,
the set of all closed subgroups of $G$ contained in $G_{v_i}$
is closed in the set of all closed subgroups of $G$,
hence $\Upsilon_i$ is closed in $\mathcal K$;
third, since $\Upsilon_j\supseteq \Upsilon_i$ whenever $j\le i$,
for any finite subset $J$ of $I$ there exists $i_J$ in $I$
such that
$\bigcap_{j\in J} \Upsilon_j\supseteq\Upsilon_{i_J}\neq\emptyset$.
So, by the compactness of $\mathcal K$
we have that $\bigcap_{i\in I} \Upsilon_i\neq\emptyset$,
and since $\bigcap_{i\in I} G_{v_i}= G_v$,
it follows that
there exists an edge stabilizer contained in $G_v$.
\end{proof}

%\newpage
Let $G$ be a pro-$p$ group.
Recall that $G$ acts
\emph{faithfully} on a pro-$p$ tree $T$ if the kernel of the action is trivial;
and $G$ acts \emph{irreducibly} on $T$
if $T$ contains no proper $G$-invariant pro-$p$ subtree.
Also,
we remind the reader
that $G$ acts \emph{virtually freely}
on a space $\Omega$ if some open subgroup of $G$ acts freely on $\Omega$
by restriction. 

\begin{lemma} %3.4
\label{l:tree_tech}
%\small
Let $G$ be a non-trivial finitely generated pro-$p$ group acting
faithfully, irreducibly and virtually freely on a pro-$p$ tree $T$.
Then there exist
a quotient pro-$p$ tree $D$ on which $G$ acts, 
an edge $e$ of $E(D)$,
a finite subset $V$ of $V(D^{G_e})$, and
a finite subset $X$ of $G$ such that:
\begin{itemize}
\item[\textup{(a)}]
$G$ acts faithfully and irreducibly on $D$;
\item[\textup{(b)}]
$G_e$ equals the stabilizer of some edge of $T$,
the stabilizer of every edge of $D$ is conjugate to $G_e$,
and $GD^{G_e}=D$;
\item[\textup{(c)}]
$V$ has at most one element of each $G$-orbit in $V(D)$;
\item[\textup{(d)}]
$X$ freely generates a free pro-$p$ subgroup of $G$
such that 
$G=\langle G_v , X\mid v\in V\rangle$ and
$\langle X\rangle \cap \langle G_v\mid v\in V\rangle^{G}=\{1\}$;
\item[\textup{(e)}]
for each $x$ in $X$, there exists $v$ in $V$ such that
$
xG_ex^{-1}\subseteq G_v \, ;
$
\end{itemize}
\end{lemma}
%cited in 3.6

\begin{proof}
First we construct the pro-$p$ tree $D$.
Since $T$ is non-trivial,
%because G is non-trivial and the action is faith.
we take an edge $a$ of $T$ with stabilizer $G_a$ of minimal order.
Let $\Sigma_a$ denote the set of all finite subgroups $L$ %(non-trivial)
of $G$ that are not conjugate to any subgroup of $G_a$.
Since $G$ is finitely generated,
\hyperref[c:finiteconjugacy]{Corollary~\ref{c:finiteconjugacy}}
says that there exist up to conjugation
only finitely many finite subgroups in $G$;
in particular, there is a finite subset $\Xi$ of
$\Sigma_a$ such that $\Sigma_a=\{L^g\mid L\in\Xi, \ g\in G\}$.
Therefore,
the union $T_{\Sigma_a}$ of all
fixed points $T^L$
for $L\in\Sigma_a$ can be represented in the form
$T_{\Sigma_a}=\bigcup_{L\in\Xi}GT^L$ and is hence a
$G$-invariant profinite subgraph of $T$.
\emph{Let $D$ be the profinite graph obtained by collapsing
the distinct connected components of $T_{\Sigma_a}$ in $T$
to distinct points};
if $T_{\Sigma_a}$ is empty then $D$ is nothing but $T$.
By the pro-$p$ version of
\cite[Proposition, p.~486]{Zalesskii:89}, %1.6.9
$D$ is a $p$-simply connected profinite graph; 
hence $D$ is a pro-$p$ tree %1.8.2
on which $G$ acts naturally.

Henceforth,
let us use ``bar'' to denote the collapsing procedure.
Note first that
the stabilizers of vertices in $D$ may well be infinite
if $T_{\Sigma_a}$ is non-empty.
Moreover,
if $m\in T-T_{\Sigma_a}$ then
$G_{\overline m}$ equals $G_m$ and
is contained, up to conjugation, in $G_a$
(because $Gm\cap T_{\Sigma_a}=\emptyset$
and $G_m\not\in\Sigma_a$).

Now, the properties in item (a) come from the original action.
Indeed, 
suppose that $T_{\Sigma_a}\neq\emptyset$ and
that the action of $G$ on $D$ were not irreducible.
Since $D$ is obtained by collapsing pro-$p$ subtrees,
the preimage of a proper $G$-invariant pro-$p$ subtree of $D$
would be a proper $G$-invariant pro-$p$ subtree of $T$;
a contradiction.
So, $G$ acts irreducibly on $D$.
Besides, 
suppose that $g$ in $G$ acts trivially upon all of $D$.
Then, in particular, 
$g\in G_{\overline{a}}=G_a$.
Hence
$G_a$ contains the kernel of the action of $G$ upon
$D$ which, by
\cite[Thm.~3.12]{RZ:00a},
must act trivially on $T$.
Therefore $G$ also acts faithfully on $D$.

\emph{Let $e=\overline{a}$}.
All assertions of item (b) 
follow from the second paragraph,
with the last one making use of the previous lemma.
In fact,
clearly $G_e=G_a$. 
%For proving (b) first put \emph{Let $e=\overline{a}$}.
%Then clearly $G_e=G_a$. 
Let $b\in E(T)-T_{\Sigma_a}$.
Since
$G_{\overline{b}}$ is contained, up to conjugation, in $G_a$,
from the minimality of $G_a$ we have that
$G_{\overline{b}}$ is conjugate to $G_e$.
Lastly, since $G$ acts continuously by conjugation on the
space of all its closed subgroups,
the conjugacy class of $G_e$ is compact, and
\hyperref[l:edges-vertices]{Lemma~\ref{l:edges-vertices}}
gives that
the stabilizer of {any} vertex in $D$ contains, 
up to conjugation, $G_e$;
that is, $GV(D^{G_e})=V(D)$.
Therefore we conclude that $GD^{G_e}=D$.

We come to prove (c), (d) and (e).
From
\hyperref[l:action]{Lemma~\ref{l:action}},
consider a finite \emph{subset $X$} of $G$,
which is a free generating set
of a retract of $G/\langle{G_v\mid v\in V(D)}\rangle$ in $G$ such that
$D^{G_e}\cap xD^{G_e}\neq\emptyset$ for each $x$ in $X$.
Clearly,
$G=\langle{G_v, X\mid v\in V(D)}\rangle$,
$\langle X\rangle \cap \langle{G_v\mid v\in V(D)}\rangle^G=\{1\}$,
and $v\in D^{G_e}\cap xD^{G_e}$
can be written as $G_e\cup xG_e x^{-1}\subseteq G_v$.
Now we note that,
by choosing a complete set of representatives of elements
in a generating set of a pro-$p$ group
of each conjugacy class
we still obtain a generating set
(because the pro-$p$ Frattini quotient is abelian).
%%and \cite[Cor. 2.8.5]{RZ:10}
%Also, we replace each $x$ in $X$ by its inverse.
%%%%%to obtain G_e^x\subseteq \cup G_v

Next, from $GV(D^{G_e})=V(D)$, it is clear that
a vertex which is the image of a vertex in $T-T_{\Sigma_a}$ 
has stabilizer conjugate to $G_e$.
On the other hand, we claim that
there is only a finite subset of vertices in $D$ up to translation
with stabilizers that are not conjugate to $G_e$.
%So ``(c) holds''.
In fact,
the number of connected components of $T_{\Sigma_a}$
up to translation is at most $|\Xi|$;
for, otherwise,
we could find two vertices $v$ and $w$ in 
distinct connected components %of $T_{\Sigma_a}$
up to translation with conjugate stabilizers,
hence $G_v$ would stabilize
the geodesic from $v$ to a translation of $w$
(see 
\hyperref[t:trees1]{Theorem~\ref{t:trees1}(c)}),
and these two vertices would be in the same
connected component, a contradiction.
Therefore, there exists a finite subset of vertices $W$,
which we may assume to be contained in $D^{G_e}$, such that
$G=\langle{G_v, X\mid v\in W}\rangle$ 
and
$\langle X\rangle \cap \langle{G_v\mid v\in W}\rangle^G=\{1\}$.
%So ``(d) holds''.
Finally,
to define a desired \emph{set $V$},
for each $x$ in $X$
we modify $W$ by adding, if necessary,
or replacing 
%(or, if necessary, by adding) %so $|W|\le |\Xi|+|X|$
a vertex $v$ in $W$ by a vertex $v_x$
in $D^{G_e}\cap xD^{G_e}$, 
whenever $Gv=Gv_x$.
Thus we see that (c), (d), and (e) all hold.
\end{proof}

The last ingredient to prove
\hyperref[t:treeacting_intro]{Theorem~\ref{t:treeacting_intro}}
is the following version of the Schreier index formula.

\begin{lemma} %3.5
\label{l:rankformula}
%\small
Let $G$ be a finitely generated pro-$p$ group
acting on a pro-$p$ tree $T$.
Suppose that all vertex stabilizers are finite and
all edge stabilizers are pairwise conjugate.
Assume further that there exist an edge $e$ of $T$,
a finite subset $U$ of $V(T^{G_e})$,
and a finite subset $X$ of $G$ such that:
\begin{itemize}
\item[\textup{(i)}] 
$U$ has exactly one element of each $G$-orbit in $V(T)$;
\item[\textup{(ii)}]  
$X$ freely generates a free pro-$p$ subgroup of $G$
such that
$G=\langle G_u , X\mid u\in U\rangle$ and
$\langle X\rangle \cap \langle G_u\mid u\in U\rangle^{G}=\{1\}$;
\item[\textup{(iii)}]
for each $x$ in $X$, there exists $u$ in $U$ such that
$
xG_ex^{-1}\subseteq G_u \, .
$
\end{itemize}
If $F$ is a free pro-$p$ open normal subgroup of $G$, then 
\[
\operatorname{rank}(F)-1=
[G:F]\left(\frac{|U|+|X|-1}{|G_e|}-\sum_{u\in U} \frac{1}{|G_u|}\right) \, .
\]
\end{lemma}
%cited in 3.6

\begin{proof}
We proceed by induction on the index $[G:F]$.
If $[G:F]=1$, then $G=\langle X\rangle$ from hypothesis~(ii),
%and so $\operatorname{rank}(F)=|X|$; 
and there is nothing to prove.
%Obviously $F\neq G$, from hypothesis~(ii); so,
Otherwise,
let us consider the preimage $N$ in $G$ of a central subgroup
of order $p$ of $G/F$.

\medskip
\noindent Case 1.
\textit{$N\cap G_e=\{1\}$.}
\medskip

According to
\hyperref[p:scheiderer]{Proposition~\ref{p:scheiderer}}
we have
$
N=F_0 \amalg (C_1\times F_1 )\amalg\cdots\amalg (C_m\times F_m),
$
where $m\geq 0$, 
the $F_i$ are free pro-$p$ groups of finite rank
and the $C_i$ are cyclic groups of order $p$.
If $m=0$, then
the Schreier index formula for $F$ in $N$ together with
the induction hypothesis for the rank of $N$ yields
the desired formula for the rank of $F$.

Suppose then $m\ge 1$.
We claim that
\begin{equation}
\label{e:scheiderer}
N=F_0\amalg C_1 \amalg \ldots \amalg C_m \ ,
\end{equation}
with $F_0$ a free pro-$p$ subgroup of $F$.
Indeed, let us first prove that
each non-trivial torsion element
$s$ in $N$ generates a self-centralized subgroup of $N$. %wh
Take any element $g$ in $N$ centralizing $s$.
Since
$\langle s\rangle$ is the stabilizer of some vertex $w$ of $T$
(by \hyperref[t:trees1]{Theorem~\ref{t:trees1}(d)}),
the element $s$ also stabilizes $gw$.
If $gw\ne w$, then from
\hyperref[t:trees1]{Theorem~\ref{t:trees1}(c)}
$s$ stabilizes
the geodesic $[gw,w]$;
however, since $N\cap G_e=\{1\}$,
the element $s$ cannot stabilize any edge.
Hence $gw=w$, and therefore $g$ is a power of $s$.
Now, furthermore, the last assertion of our claim
follows from 
\hyperref[t:afpproperties]{Theorem~\ref{t:afpproperties}(c)}.

So, by the Kurosh subgroup theorem
(\textit{cf.} \protect{\cite[Thm.~9.1.9]{RZ:10}})
we have
\begin{equation}
\label{e:rankF}
\operatorname{rank}(F)=
[N:F]\operatorname{rank}(F_0)+(1+m[N:F]-[N:F]-m) \, .
\end{equation}
Now, taking into account
\hyperref[t:trees1]{Theorem~\ref{t:trees1}(d)}
and that $G$ also acts upon
the conjugacy classes of subgroups of order $p$, 
%because of hypothesis~(i)
we rearrange the free pro-$p$ factors in
equation~(\ref{e:scheiderer}) as
\[
N=F_0\amalg\coprod_{u\in U'}\left(\coprod_{r_u\in G/NG_u}
(N\cap G_{u})^{r_u}\right) \, ,
\]
where $U':=\{u\in U\mid N\cap G_{u}\neq \{1\} \}$.
Since $FG_u=NG_u$ for every $u$ in $U'$,
using this rearranged decomposition and comparing it with
equation~(\ref{e:scheiderer})
we find
\begin{equation}
\label{e:AinLemma}
m=\sum_{u\in U'}|G/FG_u|=[G:F]\sum_{u\in U'}\frac{1}{|G_u|}
\, .
\end{equation}

If $N=G$, then $G_e=\{1\}$, 
$\operatorname{rank}(F_0)=|X|$,
and $|U|=m$.
So, equation~(\ref{e:rankF}) 
becomes exactly the needed one.

Suppose now that $N\not= G$.
Then the product $p\operatorname{rank}(F_0)$ can be computed
by observing that passing to the quotient modulo
$\langle\operatorname{tor}(N)\rangle$
and indicating it by ``bar'' we have
$\operatorname{rank}(\overline{F})=
\operatorname{rank}(F_0)$,
so that using $[G:F]=p[\overline{G}:\overline{F}]$
the induction hypothesis yields
\begin{eqnarray*}
p\operatorname{rank}(F_0)&=&p\operatorname{rank}(\overline{ F})\\
&=&p[\overline{G}:\overline{F}]
\left(\frac{|U|-1}{|\overline{ G_e}|}
-\sum_{u\in U} \frac{1}{|\overline{G_u}|}\right)
+p\\
&=&[G:F] 
\left(\frac{|U|-1}{|G_e|}
-\sum_{u\in U'} \frac{1}{|\overline{G_u}|}
-\sum_{u\in U-U'} \frac{1}{|G_u|}\right)
+p\\
&=&[G:F]
\left(\frac{|U|-1}{|G_e|}
-\sum_{u\in U'} \frac{p}{|G_u|}
-\sum_{u\in U-U'} \frac{1}{|G_u|}\right)
+p
\end{eqnarray*}
(we used $N\cap G_e =\{1\}=N\cap G_u$
for all $u\in U-U'$ and $|N\cap G_u|=p$ for all $u\in U'$
to obtain the last equality).
Inserting this expression and the expression for $m$
from equation~(\ref{e:AinLemma}) into equation~(\ref{e:rankF})
yields the claimed formula for $\operatorname{rank}(F)$.

\medskip
\noindent Case 2.
\textit{$N\cap G_e \not= \{1\}$.}
\medskip

We claim that $N\cap G_e$ is a normal subgroup of $G$ of order $p$.
Indeed, let $u\in U$.
Since all vertex stabilizers are finite, from
$(N\cap G_e)F/F\subseteq (N\cap G_u)F/F\subseteq N/F$
it follows that 
$N\cap G_e$ coincides with $N\cap G_u$ and has order $p$,
and from $G/F$ centralizing $N/F$ we have that 
$G_u$ centralizes $N\cap G_e$.
Furthermore,
by hypothesis~(iii), 
$X$ normalizes $N\cap G_e$
(since closed subsemigroups in compact groups are subgroups).
Thus, from hypothesis~(ii), the claim follows.

%By assumption we have $U\subseteq V(T^{G_e})$, 
%i.e., $G_e\subseteq \bigcap_{u\in U}G_u$. 
%Therefore, deducing from $G_u\cap F=1$ the fact that 
%$(N\cap G_u)F/F=(N\cap G_e)F/F$ 
%coincides with the central subgroup $N/F$ of order $p$,
%we note for all $u\in U$ that 
%$N\cap G_e$ is a central subgroup of $G_u$ of order $p$.
%Pick any $x\in X$. 
%Then $xG_ex^{-1}\subseteq G_{u_0}$ for some 
%$u_0\in U$ by assumption (iii) and 
%therefore $x(G_e\cap N)x^{-1}\subseteq G_{u_0}$ holds.
%Hence $x(G_e\cap N)x^{-1}\subseteq G_{u_0}\cap N$ 
%which equals to $G_e\cap N$. 
%It follows that  $X$ centralizes $N\cap G_e$. %
%So, by hypothesis~(ii), we have that 
%$N\cap G_e$ is a normal subgroup of $G$ of order $p$.

Let ``bar'' denote passing to the quotient modulo $N\cap G_e$.
Note that,
$\overline{T}$ is a pro-$p$ tree
(from hypothesis~(ii) and
\hyperref[t:trees1]{Theorem~\ref{t:trees1}(b)})
on which $\overline{G}$ acts in the obvious manner,
and all the assumptions of the lemma hold
modulo $N\cap G_e$;
moreover,
from hypothesis~(i)
it follows that
$\overline{U}$ and $U$ are in bijection.
So, the induction hypothesis yields
\begin{eqnarray*}
\operatorname{rank}(F)-1&=& \operatorname{rank}(\overline{F})-1 \\
&=&[\overline{G}:\overline{F}] 
\left(\frac{|\overline{U}|+|\overline{X}|-1}{|\overline{G}_{\overline{e}}|}
-\sum_{\overline{u}\in \overline{U}} \frac{1}{|\overline{G}_{\overline{u}}|}\right) \\
&=&\frac{[G:F]}{p}
\left(\frac{p(|U|+|X|-1)}{|G_e|}
-\sum_{u\in U} \frac{p}{|G_u|}\right) \\
&=&[G:F]
\left(\frac{|U|+|X|-1}{|G_e|}-\sum_{u\in U} \frac{1}{|G_u|}\right)
\end{eqnarray*}
as needed.
\end{proof}

\newcommand{\cG}{{\mathcal G}}
\begin{rmk}\label{euler}\rm
Let $G=\Pi_1(\cG,\Gamma,L)$ 
be the pro-$p$ fundamental group of a finite graph 
of finite $p$-groups $(\cG,\Gamma)$ and 
spanning tree $L$ of $\Gamma$. 
The result
\cite[Lemma~3.15]{ZM:89} implies for any open
free pro-$p$ subgroup $F$ of $G$ the validity of the rank formula
\[
\operatorname{rank}(F)-1=|G:F|
\left(\sum_{e\in E(\Gamma)}\frac1{|\cG(e)|}
-\sum_{v\in V(\Gamma)}\frac1{|\cG(v)|}\right) .
\]
Let us make the assumption that for all $e$ not in $L$ 
the edge groups $\cG(e)$ 
have the same cardinality and 
let $G$ act on its standard graph $T$. 
Then we can lift the spanning tree to $T$ in order to get 
a set $U$ of representatives of vertices of $V(\Gamma)$ in $T$ 
and the edges $e$ not belonging to $L$ give rise to 
a set of generators $X$ of a free pro-$p$ subgroup of $G$.
The preceding Lemma rephrases this rank formula, 
since $G$ and $T$ satisfy the premises of it and 
$\Gamma$ having $|U|$ vertices and 
$|X|$ edges not contained in a spanning tree, has number
of edges equal to $|U|+|X|-1$.
\end{rmk}

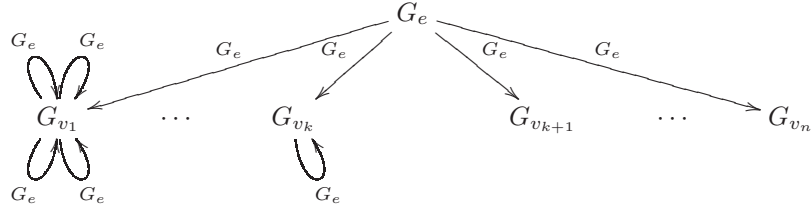
\begin{figure}
\[ 
\xymatrix{
& & &
{G_e} \ar[dlll]_{G_e} \ar[dl]_{G_e} \ar[dr]^{G_e} \ar[drrr]^{G_e}\\
{G_{v_1}} \ar@(ul,u)[]^{G_e} \ar@(u,ur)[]^{G_e}
\ar@(dl,d)[]_{G_e} \ar@(d,dr)[]_{G_e}
&\cdots &
{G_{v_ k}} \ar@(d,dr)[]_{G_e} &
&
{G_{v_ {k+1}}}
& \cdots &
{G_{v_n}}
}
\] 
\caption{A graph of groups with equal edge groups as in Remark~\ref{euler}}
\label{fig1}
\end{figure}

%\newpage
Our next two results constitute
\hyperref[t:treeacting_intro]{Theorem~\ref{t:treeacting_intro}}.
\begin{thm} %3.6
\label{t:treeacting}
An infinite finitely generated pro-$p$ group acting
virtually freely on a pro-$p$ tree 
splits over some edge stabilizer either
as an amalgamated free pro-$p$ product or 
as a pro-$p$ $\operatorname{HNN}$-extension.
\end{thm}

\begin{proof}
%\large
Let $G$ be an infinite finitely generated pro-$p$ group
acting on a pro-$p$ tree $T$
with an open normal subgroup $F$ of $G$ acting freely on $T$ 
by restriction.
The proof is by induction on $[G:F]$.
If $[G:F]=1$,
then $G$ is obviously an amalgamated free pro-$p$ product
over a trivial edge stabilizer.
Suppose then $[G:F]>1$.

By \cite[Lemma~3.11]{RZ:00a}, there exists a 
(non-empty) unique minimal $G$-invariant pro-$p$ subtree in $T$;
replacing $T$ by this subtree we may assume that
the original action of $G$ is irreducible.

\medskip
\noindent Case 1.
\textit{$G$ acts irreducibly but non-faithfully on $T$.}
\medskip

Since an open subgroup of $G$ acts freely on $T$, 
the kernel of the action must be finite. 
Hence, $G$ contains a central subgroup $C$ of order $p$,
which must act trivially on $T$ 
(\textit{cf.} \cite[Thm.~3.12]{RZ:00a}).
By induction, since $[G/C:FC/C]<[G:F]$,
the quotient group $G/C$ splits 
over the stabilizer $\overline{K}$ of an edge of $T$
either as
an amalgamated free pro-$p$ product,
$G/C=\overline{G_1}\amalg_{\overline{K}}\overline{G_2}$,
or as
a pro-$p$ $\operatorname{HNN}$-extension,
$G/C=\operatorname{HNN}(\overline{G_1},\overline{K},t)$.
But then, either $G=G_1\amalg_K G_2$ or
$G=\operatorname{HNN}(G_1,K,t)$
with $G_1$, $G_2$, and $K$ being the preimages of
$\overline{G_1}$, $\overline{G_2}$, and $\overline{K}$ in $G$,
respectively;
note that the subgroup $K$ is indeed an edge stabilizer.

\medskip
\noindent Case 2.
\textit{$G$ acts irreducibly and faithfully on $T$.}
\medskip

Readily, \textit{there exist $D$, $e$, $V$, $X$, and $G_0$ having the properties (a)--(e) of} 
\hyperref[l:tree_tech]{\textup{Lemma~\ref{l:tree_tech}}} and henceforth we consider the action of $G$ on $D$.
Let $N$ be an open normal subgroup of $G$ contained in $F$ and $\widetilde N:=\langle G_v\cap N\mid v\in V(D)\rangle$.
Then $D_N:=\widetilde N\backslash D$ is a pro-$p$ tree on which $G_N:=G/\widetilde N$ acts.
Since $G_e\cap N=\{1\}$ we find that all edge stabilizers of $D_N$ are conjugate to $G_e\widetilde N/\widetilde N\cong G_e$.
Moreover, if $N$ is ``small enough'' we can arrange that the images of the
data $V$, $e$, $X$, and $G_0$ do have the properties (a)--(e) of 
\hyperref[l:tree_tech]{\textup{Lemma~\ref{l:tree_tech}}} and $V_N:=\widetilde N \backslash V$ and
$X_N:=X\widetilde N/\widetilde N\subset G_N$ have respectively the same cardinalities as $V$ and $X$.
Let $\mathcal N$ denote the set of open normal subgroups $N$ of $G$ enjoying these
properties.
So, for each $N\in\mathcal N$,
\hyperref[l:tree_tech]{\textup{Lemma~\ref{l:tree_tech}}}
gives us the fundamental pro-$p$ group $\Pi_N$
of a finite connected graph of groups 
described as follows (\textit{cf.} \hyperref[euler]{\textup{Figure~\ref{euler}}}):
there are $|V|+1$ vertex groups;
for each $v$ in $V$ we have one vertex group $G_v\widetilde N/\widetilde N$
which is the terminal vertex group
of a single edge group $G_e\widetilde N/\widetilde N$ having initial vertex group
the distinguished vertex group $G_e\widetilde N/\widetilde N$;
for each $x$ in $X_N$ we have an edge group $G_e\widetilde N/\widetilde N$
connecting at both ends the same vertex group $G_v\widetilde N/\widetilde N$,
whenever there exists $v$ in $V$ such that
$xG_ex^{-1}\subseteq G_v$;
each morphism from an edge group to its
initial or terminal vertex group is the inclusion map.

Let $\lambda_N:\Pi_N\to G/\widetilde N$ be the canonical epimorphism
induced by universality from the map that sends identically every $G_v\widetilde N/\widetilde N$ to its isomorphic
copy in $G/\widetilde N$ and every $x\in X_N$ identically to $x\in G/\widetilde N$.
Observe that $N/\widetilde N$ is free pro-$p$ of finite rank and that the vertex stabilizers of $G_N$
acting on $D_N$ are all finite. 
We claim that the preimage $\lambda_N^{-1}(N/\widetilde N)$ is a free pro-$p$ group.
Indeed, since $\ker(\lambda_N)$ acts freely by restriction on the standard graph $S_N$ of $\Pi_N$,
$\ker\lambda_N$ is a torsion-free group. Thus $\lambda_N^{-1}(N/\widetilde N)$, as an extension of torsion-free
groups, is also torsion-free, and hence $\lambda_N^{-1}(N/\widetilde N)$ acts by restriction on $S_N$ with
trivial edge stabilizers. Taking into account that by construction $S_N/\lambda_N^{-1}(N/\widetilde N)$
is finite, 
we apply \hyperref[p:freeedgeaction]{\textup{Proposition~\ref{p:freeedgeaction}}}
to get that $\lambda_N^{-1}(N/\widetilde N)$ is a free pro-$p$ product of free pro-$p$ groups, and hence 
a free pro-$p$ group.
Now, letting $N/\widetilde N$ play the role of $F$ in the rank formula of
\hyperref[l:rankformula]{\textup{Lemma~\ref{l:rankformula}}} in comparison with the rank formula
of \hyperref[euler]{\textup{Remark~\ref{euler}}} we deduce that the restriction of
$\lambda_N$ to $\lambda_N^{-1}(N/\widetilde N)$ is an isomorphism and therefore $\lambda_N:\Pi_N\to G_N$ is
an isomorphism. 
Observing for open normal subgroups $M\subseteq N$ in $\mathcal N$ that the diagram
$$\xymatrix{\Pi_M\ar@{->}^{\lambda_M}[r]\ar@{->}[d]&G/\widetilde M\ar@{->}[d]\\
            \Pi_N\ar@{->}^{\lambda_N}[r]           &G/\widetilde N}$$
with vertical epimorphisms natural, commutes, 
a standard inverse limit argument (cf.  \cite[Lemma 1.1.5]{RZ:10}) shows that $G\cong \Pi$, 
where $\Pi$ is the fundamental pro-$p$ group of the same graph of groups
(cf. \hyperref[euler]{\textup{Figure~\ref{euler}}})
with initial vertex group $G_e$ and, for every $x\in X$ an edge groups $G_e$ connected to both ends
of the same vertex group $G_v$ whenever there is $v\in V$ such that $xG_ex^{-1}\subseteq G_v$. 

Now certainly $\Pi$ splits over a finite edge group as an amalgamated free pro-$p$ product or HNN-extension,
depending whether removing this edge we have one connected component or two.
\end{proof}

%\newpage
\begin{thm} %3.7
\label{t:fund}
A finitely generated pro-$p$ group $G$ acting
virtually freely on a pro-$p$ tree $T$
is isomorphic to the
fundamental pro-$p$ group 
of a finite connected graph of finite $p$-groups
whose edge and vertex groups are isomorphic to
the stabilizers of some edges and vertices of $T$.
\end{thm}

\begin{proof}
Let $F$ be a maximal normal free pro-$p$
subgroup $F$ of $G$ of smallest rank.
The proof is by induction on $\operatorname{rank}(F)$.
If $\operatorname{rank}(F)=0$, that is $G$ is finite,
then we take as graph of groups the single vertex group $G$;
the result follows from
\hyperref[t:trees1]{Theorem~\ref{t:trees1}(d)}.
In the general case, we apply
\hyperref[t:treeacting]{Theorem~\ref{t:treeacting}}
to split $G$ over the stabilizer $K$ of an edge of $T$
either as an amalgamated free pro-$p$ product
$G=G_1\amalg_K G_2$ or
as a pro-$p$ $\operatorname{HNN}$-extension
$G=\operatorname{HNN}(G_1,K,t)$.
Note that, 
replacing $G_2$ by $G_1$ if necessary,
we may assume that $K$ is contained,
up to conjugation, in $G_1$.

Now each amalgamated free pro-$p$ factor, 
or the base group,
satisfies the induction hypothesis.
Indeed, to prove it say for
$G_1$, we apply
\hyperref[p:freeedgeaction]{Proposition~\ref{p:freeedgeaction}} to
$F$ to deduce that $G_1\cap F$ is a free pro-$p$ factor of $F$.
Let us prove that  $G_1\cap F\neq F$. 
If this were not the case then \cite[Theorem~3.12]{RZ:00a} implies that $F$ would act 
trivially on the standard graph corresponding to
the presentation of $G$ as an amalgamated free pro-$p$ group or pro-$p$ $\operatorname{HNN}$-extension.
Hence $F$ would fix the edge consisting of the coset $1K$ and so $F\subseteq K$, a contradiction as
$F$ is infinite and $K$ is finite. 
Thus the rank of $G_1\cap F$ is less than the rank of $F$. 
Besides, if $G_1\cap F$ were not maximal in $G_1$, then the rank
of the maximal open free subgroup in $G_1$ could not be larger than the rank of $G_1\cap F$ 
in light of the Schreier formula
(by \hyperref[p:freeedgeaction]{Proposition~\ref{p:freeedgeaction}} applied to $F$).
Therefore the induction hypothesis holds for $G_1$ and $G_2$.

So, $G_1$ and $G_2$ are fundamental pro-$p$ groups
of finite connected graphs of finite $p$-groups.
By \cite[Thm.~3.10]{ZM:89}
(see \hyperref[t:trees1]{Theorem~\ref{t:trees1}(d)}),
$K$ is conjugate to some vertex group of $G_1$ and so
we may assume that
$K$ is contained in a vertex group of $G_1$.
In the case of an amalgamated product
there exists $g_2$ in $G_2$ such that $K^{g_2}$ is contained in
a vertex group of $G_2$, so $G$ admits a decomposition
$G=G_1^{g_2}\amalg_{K^{g_2}} G_2$.
Thus in both cases $G$ becomes
the fundamental pro-$p$ group 
of a finite connected graph of finite $p$-groups.
\end{proof}

%\newpage
The obvious fact that
the order of every finite subgroup is bounded by
the index of any torsion-free subgroup,
says that the vertex and edge groups in
\hyperref[t:treeacting_intro]{Theorem \ref{t:treeacting_intro}(b)}
are bounded by the index of any
open subgroup acting freely by restriction on $T$.
We should also mention that, 
due to the finiteness of the graph in
\hyperref[t:fund]{Theorem \ref{t:fund}},
the group $G$ in
\hyperref[t:treeacting_intro]{Theorem \ref{t:treeacting_intro}}
is the pro-$p$ completion of some dense
finitely generated discrete virtually free subgroup.

Next, we easily derive
\hyperref[t-subgrouptheorem_intro]{Theorem~\ref{t-subgrouptheorem_intro}}
from
\hyperref[t:treeacting_intro]{Theorem \ref{t:treeacting_intro}}.

\begin{proof}[Proof of \protect{\upshape
\hyperref[t-subgrouptheorem_intro]{Theorem~\ref{t-subgrouptheorem_intro}}}]
The fundamental pro-$p$ group $G$
acts naturally on its standard pro-$p$ tree $T$
(\textit{cf.} \cite[Sec.~3]{ZM:89}).
Moreover, since we have a finite graph of finite groups,
there exists an open normal subgroup of $G$
that intersects all vertex groups trivially, and 
hence acts freely on $T$.
Applying
\hyperref[t:fund]{Theorem~\ref{t:fund}}
to the action restricted to $H$,
we immediately obtain the result.
\end{proof}

%\newpage
We end this section by recalling the example from \cite{HZ:10}
of an infinite countably generated pro-$p$ group
acting virtually freely on a pro-$p$ tree that
satisfies none of the conclusions of
\hyperref[t:treeacting_intro]{Theorem \ref{t:treeacting_intro}}.

\begin{example}
\label{ex:4.3} \upshape
Let $A$ and $B$ be groups of order $2$ and
$H$ 
be a pro-$2$ $\operatorname{HNN}$-extension 
with base group $A\times B$,
associated subgroups $A$ and $B$,
and stable letter $t$.
Note that $H$ admits an automorphism $\varphi$
of order $2$ that swaps $A$ and $B$ and inverts $t$.
Moreover,
the holomorph
$H\rtimes \langle\varphi\rangle$
is isomorphic to
$((A\times B)\rtimes \langle\varphi\rangle)
\amalg_{A}
(A\times\langle\varphi t\rangle)$,
the free pro-$2$ product of
the dihedral group of order $8$
and the Klein four-group
amalgamated along the cyclic group $A$.
Let
$G=\langle\operatorname{tor}(H)\rangle
\rtimes\langle\varphi\rangle$.
Since
the amalgamated free pro-$2$ product
acts virtually freely on its standard pro-$2$ tree,
the group $G$ acts virtually freely on this pro-$2$ tree by restriction.
The main result of Herfort and Zalesskii
\cite{HZ:10}
shows that $G$ does not split over a finite group as %the
fundamental pro-$2$ group of a profinite graph of finite $2$-groups.
Its proof also shows that $G$ does not decompose as
an amalgamated free pro-$2$ product or
as a pro-$2$ $\operatorname{HNN}$-extension.
\end{example}

%%%%%%%%%%%%%%%%%%%%%%%%%%%%%%
%\newpage
\phantomsection
\section{\texorpdfstring{$2$}{2}-generated subgroups}
\label{s:2-generated}
%%%%%%%%%%%%%%%%%%%%%%%%%%%%%%

This final section is devoted to %the proof of
\hyperref[t:freeorabelian_intro]{Theorem~\ref{t:freeorabelian_intro}}.
We begin presenting auxiliary results 
%(\hyperref[p:2gen]{Proposition~\ref{p:2gen}}-
%\hyperref[l:fp]{Lemma~\ref{l:fp}})
that are used to prove it.
%\hyperref[t:freeorabelian_intro]{Theorem~\ref{t:freeorabelian_intro}}.
%six of which being about inverse limits.

%\newpage
\begin{prop} %2.7
\label{p:2gen}
%\small
 Let $G$ be a $2$-generated pro-$p$ group.
 \begin{itemize}
 \item[\textup{(a)}]
 If $G$ is a free pro-$p$ product with
 procyclic amalgamation,
 then one of its amalgamated free pro-$p$ factors is procyclic.
 \item[\textup{(b)}]
 If $G$ is a proper pro-$p$ $\operatorname{HNN}$-extension
 with procyclic associated subgroups,
 then its base subgroup is at most $2$-generated.
 \item[\textup{(c)}]
 If $G$ is the fundamental pro-$p$ group of a finite non-trivial tree,
 of finite $p$-groups such that all edge groups are cyclic,
 then either %$|G|< \infty$ or 
 $G=K\amalg_C R$ with $K$ finite cyclic and $R$ finite,
 or $G=K\amalg_D M\amalg_E N$,
 with $K$ and $N$ finite cyclic and $M\subseteq \Phi(G)$.
 \end{itemize}
\end{prop}

\begin{proof}
\noindent (a) Let $G=A\amalg_{C} B$, and
let ``bar'' indicate passing to the pro-$p$ Frattini quotients
$A/\Phi(A)$, $B/\Phi(B)$, and $C/\Phi(C)$.
We have an obvious epimorphism
from $G$ to the induced pushout 
$\overline{A}\amalg_{\overline{C}} \overline{B}$,
denoted by $P$.
Let $n\!:=d(A)+d(B)$.
Since $C$ is procyclic, the image $M$ of the
kernel of the canonical map
$\overline{A}\amalg \overline{B} \to P$
via the cartesian map
$\overline{A}\amalg \overline{B}\to \overline{A}\times \overline{B}$
is also procyclic.
The latter map induces an epimorphism from $P$
to the at least $(n-1)$-generated elementary abelian pro-$p$ group
$(\overline{A}\times \overline{B})/M$.
Therefore, $n-1\le d(G)$ and the result follows.

\noindent (b) Let $G=\operatorname{HNN}(H,C,f,t)$
with $C={\langle c\rangle}$, and
denote by ``bar'' passing to pro-$p$ Frattini quotients.
%If $d(H)\ge 3$ then $d(G)\ge 3$
%as can be seen by 
From the obvious epimorphism
$G\to (\overline{H}\times \overline{\langle t\rangle})/
%{\langle tct^{-1}f(c)^ {-1}\rangle}$.
{\langle 
\overline{t}^{-1}\overline{c}\overline{t}(\overline{f(c)})^{-1}
\rangle}$
it follows that
$d(H)\le d(G)$.
%Thus $d(H)\le 2$

\noindent (c) Let $G=\Pi_1(\mathcal{G},\Gamma)$,
with finite vertex groups $\mathcal{G}(v)$
and cyclic edge groups $\mathcal{G}(e)$.
We claim that $\Gamma$ has at most $3$ vertices.
Indeed, 
%By assumption $|V(\Gamma)|\ge2$, 
%and therefore it has an edge $e$. 
splitting $G$ over an edge $e$ of $\Gamma$,
we may and do assume that
$\mathcal{G}(d_0(e))$ is procyclic by item (a); 
hence $d_0(e)$ is a {\it pending vertex} of $\Gamma$
(\textit{i.e.}, $e$ is the unique edge incident to the vertex).
Suppose now that $\Gamma$ has at least $3$ vertices, 
and let $a$ be an arbitrary edge of $\Gamma\!-\!\{e\}$
having initial or terminal vertex $d_1(e)$;
without loss of generality, suppose that $d_0(a)=d_1(e)$.
Then $d_1(a)$ is a pending vertex with
procyclic vertex group $\mathcal{G}(d_1(a))$; 
for, otherwise, by splitting $G$ over the edge $a$ 
we would obtain that $d(G)>2$, a contradiction.
Now, if we have a number $r\ge 2$ of edges with initial or
terminal vertex $d_1(e)$ then
it follows from the pro-$p$ presentation of $G$ 
%EXPLAIN? add presentation???
that it has a free pro-$p$ abelian group 
%$\mathbb{Z}_p^{r}$
%$\mathbb{Z}_p^{\,r}$
%${\mathbb{Z}_p}^{r}$
${\mathbb{Z}_p}^{\!\!\!r}$
as a quotient;
this implies $r=2$, whence $|V(\Gamma)|\le 3$.

If $|V(\Gamma)|=2$ then $G=K\amalg_D M$
with $K$ and $M$ finite,
and, by item (a), we can assume that $K$ is cyclic.

Finally suppose that $|V(\Gamma)|=3$.
Then $G=K\amalg_D M\amalg_E N$
with $K$, $M$, and $N$ finite, and $D$ and $E$ cyclic.
Since the decomposition of $G$ is proper, we have
%By the properness of our decomposition we have
$d(K\amalg_D M)=d(M\amalg_E N)=2$ and, 
making use of item (a),
we conclude that $K$ and $N$ must both be cyclic.
Since $d(G)=2$ then $M\subseteq \Phi(G)$ follows.
\end{proof}

%\newpage
\begin{lemma} %2.8
\label{l:2gen_hnn}
%\small
Let $G=\operatorname{HNN}(H,A,t)$ be a proper
pro-$p$ $\operatorname{HNN}$-extension.
Suppose that $G$ is a $2$-generated pro-$p$ group and 
$A$ is procyclic.
\begin{itemize}
 \item[\textup{(a)}]
 If $H$ is a free pro-$p$ group of rank $2$, then so is $G$.
 \item[\textup{(b)}]
 Assume that the centralizer $C_G(A)$ is abelian.
 If $H$ is a free abelian pro-$p$ group, then so is $G$.
\end{itemize}
\end{lemma}
%cited in 4.2

\begin{proof}
\noindent (a) 
Since $d(G)\le 2$,
either $A$ or $A^t$ is not contained in the Frattini $\Phi(H)$
(see the proof of 
\hyperref[p:2gen]{Proposition~\ref{p:2gen}(b)}); 
thus,
the procyclicity of $A$ gives that 
either $A$ or $A^t$ is a free factor of $H$
(\textit{cf.} \cite[Lemma~9.1.18]{RZ:10}).
Without loss of generality suppose $H=A\amalg E$.
Let 
%$d$ and $f(d)$ be generators of $D$ and $D^t$, respectively.
$a$ be a generator of $A$ and
$f(a)$ be the corresponding generator of $A^t$.
Then $G\cong(A\amalg E\amalg\langle t\rangle)/
\langle a^tf(a)^{-1}\rangle^{A\amalg E\amalg\langle t\rangle}$
and $a^tf(a)^{-1}\not\in\Phi(A\amalg E\amalg\langle t\rangle)$ 
because $(A\amalg E)\cap (A\amalg E)^t=\{1\}$ 
(see 
\hyperref[t:afpproperties]{Theorem~\ref{t:afpproperties}(b)}).
So, $G$ is a free pro-$p$ group of rank 2.

\noindent (b)
Suppose that $G$ is not a free abelian pro-$p$ group.
Note that $G$ contains the non-abelian subgroup
$\langle H,H^t\rangle$, %K\ne D
but 
$C_G(A)$ is abelian and also contains
$\langle H,H^t\rangle$;
a contradiction.
\end{proof}
%check!

%\newpage
We shall use the following general lemma,
whose profinite version is essentially
\cite[Prop.~2.2(ii)]{BZ:11}, to prove
\hyperref[p:decomphnn]{Proposition~\ref{p:decomphnn}};
we give a proof of it for the convenience of the reader.

%\newpage
\begin{lemma} %2.9
\label{l:hnn_conjug}
%\small
Let $G=\operatorname{HNN}(H,A,t)$ be a proper
pro-$p$ $\operatorname{HNN}$-extension.
If $A$ and $A^t$ are not conjugate in $H$,
then
$N_G(A)=
N_{H\phantom{^{t^{-1}}}\!\!\!\!\!\!\!\!\!}(A)
\amalg_A N_{H^{t^{-1}}}(A)$.
\end{lemma}
%cited in 2.10

\begin{proof}
Let $T$ be the standard pro-$p$ tree on which $G$ acts
(\textit{cf.} \cite[Sec.~4]{RZ:00a}).
By restriction,
the normalizer $N_G(A)$ acts on
the pro-$p$ subtree $T^A$.
We claim that the quotient graph
$N_G(A)\backslash T^A$ is either a loop or a segment.
In fact, if $gA$ is an edge of $T^A$ then $g\in N_G(A)$
(because closed subsemigroups in compact groups are subgroups);
that is, we have a unique edge in the quotient.
%So, $T^A/N_G(A)$ is either a loop or a segment.
Now, the hypothesis of $A$ and $A^t$ %$B=t^{-1}At$
being non-conjugate in $H$
is equivalent to $N_G(A)1H\ne N_G(A)tH$;
hence $N_G(A)\backslash T^A$ is a segment.
From the pro-$p$ version of \cite[Prop.~4.4]{ZM:90},
%Suppose {H} acts on a simply connected graph {Sigma}, such that the quotient {Gamma = H \ Sigma} is finite. Then {H} is the fundamental group of the graph of groups {(\mathcal{G}, Gamma)}, where, for each {m} in {Gamma}, {\mathcal{G} (m)} is isomorphic to the stabilizer in {H} of some pre-image of {m} in {Sigma}.
the lemma follows.
\end{proof}

{\it For the next three more technical propositions, 
directed posets $I$ of inverse systems
are assumed to be isomorphic to $\mathbb N$.}

%\newpage
\begin{prop} %4.4
\label{p:decomphnn}
%\small
Let $G$ be the inverse limit of a surjective inverse system
%Let $G$ be the strict inverse limit
$\{G_i,\varphi_{ij},I\}$ of pro-$p$ groups.
Suppose that each
$G_i$
is equal to a proper pro-$p$ $\operatorname{HNN}$-extension
$\operatorname{HNN}(H_i,A_i,B_i,t_i)$
with $H_i$ finite. %and $\varphi_{ij}(H_i)\cong H_j$.
We have:
%\newpage
\begin{itemize}
\item[\textup{(a)}]
There exists an inverse system of groups
$\{H_i',\varphi_{ij},J\}$
where $J$ is a cofinal subset of $I$,
and each $H_i'$ is a conjugate to a subgroup of $H_i$
by an element of $G_i$.
\item[\textup{(b)}]
If $\varphi_{ij}(H_i)\cong H_j$,
then there exists a cofinal subset $J$ of $I$ such that, 
either there is an inverse system of groups
$\{A_i'',\varphi_{ij},J\}$ with
each $A_i''$ conjugate to $A_i$
by an element of $G_i$, 
or there is an inverse system of groups
$\{B_i'',\varphi_{ij},J\}$ with
each $B_i''$ conjugate to $B_i$
by an element of $G_i$.
\item[\textup{(c)}]
If, in addition to the assumptions in (b), 
%$\varphi_{ij}(H_i)\cong H_j$ and moreover 
$\varphi_{ij}(A_i)\cong A_j$,
then  $G=\operatorname{HNN}(H,A,B,t)$ with
 $H\!:=\varprojlim H_i'$,
$A\!:=\varprojlim A_i''$ and $B\!:=\varprojlim B_i''$.
%\look{cutting}
%where each $B_i''$ is conjugate to $B_i$ 
%for some element in $G_i$.
\end{itemize}
\end{prop}

\begin{proof} \
Fix $l$ and $k$ in $I$ with $k\le l$.
By 
\hyperref[t:hnnproperties]{Theorem~\ref{t:hnnproperties}(a)}
there is an element $g_k$ in $G_k$ with
$\varphi_{lk}(H_l)\subseteq H_k^{g_k^{-1}}$.
Conjugating $H_l$, $A_l$, $B_l$ and $t_l$
by an element $g_l$ in $\varphi_{lk}^{-1}(g_k)$,
we do not change $G_l$, and 
we obtain $H_l'$, $A_l'$, $B_l'$ and $t_l'$ such that
$\varphi_{lk}(H_l')\subseteq H_k'$.

\medskip 

\noindent (a) \ 
The cofinal subset $J$ of $I$ 
is inductively constructed.

\medskip

\noindent (b) \ 
Since $A_l = H_l \cap H_l^{t_l^{-1}}$ and
$\varphi_{lk}$ is surjective, 
by 
\hyperref[t:hnnproperties]{Theorem~\ref{t:hnnproperties}(b)}
we have that
$\varphi_{lk}(A_l')$ is,
up to conjugation by an element of $h_k^{-1}$ of $H_k'$,
contained in $A_k'$ or $B_k'$.
By the same argument $\varphi_{lk}(B_l')$ is,
up to conjugation by an element of $H_k'$,
contained in $A_k'$ or $B_k'$.

Suppose that for infinitely many indices $i$ and $j$ of $I$
we have that $A_i'$ are, up to conjugation, sent to $A_j'$.
Conjugating $H_i'$, $A_i'$, $B_i'$ and $t_i'$
by an element $h_i$ of $\varphi_{ij}^{-1}(h_j)\cap H_i'$
(note that such $h_i$ exists because $\varphi_{ij}(H_i)\cong H_j$), 
we obtain $H_i'$, $A_i''$, $B_i''$ and $t_i''$ such that
$\varphi_{ij}(A_i'')$ is contained in $A_j'$;
this does not change the group $G_i$.
Similarly as in (a), inductively, we obtain
an inverse system $\{ A_i'', \varphi_{ij},  J \}$ with $J$ cofinal to $I$. 

Otherwise, for almost all $i>j$, up to conjugation in $H_j$
the subgroup $A_i$ is sent to $B_j'$.
Then inductively we obtain
an inverse system $\{ B_i'', \varphi_{ij},  J \}$ with $J$ cofinal to $I$. 

%\newpage
\medskip

\noindent (c) \ 
Passing to a cofinal subset of $I$,
%an infinite subset of J, or the complement of a finite subset of J
for each $i$ we have two cases:
(i) $A_i''$ and $B_i''$ are conjugate in $H_i'$;
(ii) $A_i''$ and $B_i''$ are not conjugate in $H_i'$.

In case (i), after conjugation not changing $G_j$, we may
suppose that $A_i''$ and $B_i''$ coincide.
Thus, we obtain coinciding inverse systems
$\{A_i'',\varphi_{ij},J\}$ and $\{B_i'',\varphi_{ij},J\}$.

In case (ii),
we may assume that each $\varphi_{ij}(A_i'')$
and $\varphi_{ij}(B_i'')$
coincides with $A_j''$ or $B_j''$,
since $\varphi_{ij}(A_i)\cong A_j$.
Now, it cannot happen that both $A_i''$ and $B_i''$ are sent
to the same associate subgroup of $H_j'$.
Indeed, if say $\varphi_{ij}(A_i'')=A_j''=\varphi_{ij}(B_i'')$,
then $\varphi_{ij}(t_i'')$ would normalize $A_j''$
and thus $\varphi_{ij}(t_i'')$ would be contained 
in the proper subgroup
$\langle H_k',{H_k'}^{t_k''^{-1}}\rangle$,  
by 
\hyperref[l:hnn_conjug]{Lemma~\ref{l:hnn_conjug}};
this  contradicts to $\varphi_{ij}$ being surjective.
Therefore,
either $\varphi_{ij}(A_i'')\subseteq A_j''$
and $\varphi_{ij}(B_i'')\subseteq B_j''$,
or $\varphi_{ij}(A_i'')\subseteq B_j''$
and $\varphi_{ij}(B_i'')\subseteq A_j''$.
Passing to a cofinal subset,
we obtain inverse systems
$\{A_i'',\varphi_{ij},J\}$ and $\{B_i'',\varphi_{ij},J\}$.

Now, let $H\!:=\varprojlim H_i'$,
$A\!:=\varprojlim A_i''$, $B\!:=\varprojlim B_i''$,
%$B\!:=\varprojlim {A_i'}^{t_i'}$
and let $\varphi_i\colon G\to G_i$ be the projections.
For each $i$ in $I$ let us consider the subset
\[
X_i\!:=\{\tau_i\in G\mid\varphi_i(A)^{\varphi_i(\tau_i)}\!=\!\varphi_i(B)
{ \, \ \text{and} \, \ }
G_i={\langle \varphi_i(H), \varphi_i(\tau_i)\rangle} \} \ . 
\]
Clearly every $X_i$ is a non-empty compact set, and
since $X_{i+1}\subseteq X_i$, %for all $i\in I$,
there exists an element $t$ in $\bigcap_i X_i$ such that $B=A^t$.

The existence of the desired isomorphism from
$\operatorname{HNN}(H,A,B,t)$ onto $G$ follows now from
the universal property of 
pro-$p$ $\operatorname{HNN}$-extensions.
\end{proof}

%\newpage
\begin{prop} %4.5
\label{p:decompfp}
%\small
Let $G$ be the inverse limit of a surjective inverse system
%Let $G$ be the strict inverse limit 
$\{G_i,\varphi_{ij},I\}$ of pro-$p$ groups.
Suppose that each $G_i$ is equal to a free pro-$p$ product
$A_i\amalg B_i$ with $A_i$ finite cyclic and $B_i$ procyclic.
We have:
\begin{itemize}
\item[\textup{(a)}]
If some $B_i$ is infinite, then 
there exists an inverse system $\{A_{i}',\varphi_{ij},J\}$ 
where $J$ is a cofinal subset of $I$,
and each $A_i'$ is a conjugate of $A_i$
by an element of $G_i$, such that
$G\cong \left(\varprojlim A_{i}'\right) \amalg \mathbb{Z}_p$.
\item[\textup{(b)}]
If each $B_i$ is finite, then
there exist inverse systems
$\{A_{i}',\varphi_{ij},J\}$ and $\{B_{i}',\varphi_{ij},J\}$,
where $J$ is a cofinal subset of $I$,
and each $A_i'$ (resp. $B_i'$) is a conjugate of $A_i$ (resp. $B_i$)
by an element of $G_i$, such that
 $G\cong \left(\varprojlim A_{i}'\right)
 \amalg \left(\varprojlim B_{i}'\right)$.
\end{itemize}
\end{prop}

\begin{proof} 
\noindent (a)
Let $i_0\in I$ such that $B_{i_0}\cong {\mathbb Z}_p$,
we have $B_i=\langle t_i\rangle\cong {\mathbb Z}_p$
for each $i$ with $i_0\le i$.
By 
\hyperref[t:afpproperties]{Theorem~\ref{t:afpproperties}(a)},
$A_i$ is mapped by $\varphi_{ij}$ to a conjugate of $A_j$.
And in fact, surjectively; 
for, otherwise,
the induced homomorphism between the cartesian products
$A_i\times B_i \to A_j\times B_j$
would not be surjective.
To obtain the desired result,
we apply
\hyperref[p:decomphnn]{Proposition~\ref{p:decomphnn}(c)}
with $G_i= \operatorname{HNN}(A_{i},\{1\},t_i)$.

\medskip
%\newpage
\noindent(b)
Since each $B_{i}$ is finite
and each $\varphi_{ij}$ is surjective,
from 
\hyperref[t:afpproperties]{Theorem~\ref{t:afpproperties}(a)},
we obtain that distinct free factors of $G_i$ are,
up to conjugation, mapped to distinct free factors of $G_j$.
So, there exists a cofinal subset $J$ of $I$ such that
for every $i$, $j$ in $J$ we have
$\varphi_{ij}(A_{i})= A_{j}^{x_{j}}$
and 
$\varphi_{ij}(B_{i})= B_{j}^{y_{j}}$
for certain $x_{j},y_{j}$ in $G_j$.
Then, after conjugations of free factors,
we inductively obtain
the desired inverse systems
$\{A_{i}',\varphi_{ij},I\}$ and
$\{B_{i}',\varphi_{ij},I\}$.
The result follows from \cite[Lemma~9.1.5]{RZ:10}.
\end{proof}

%\newpage
\begin{prop} %4.6
\label{p:treeprod}
%\small
Let $G$ be the inverse limit of a surjective inverse system
%Let $G$ be the strict inverse limit 
$\{G_i,\varphi_{ij},I\}$ of  pro-$p$ groups $G_i$.
Suppose $G_i$ is equal to an amalgamated free pro-$p$ product
$G_i=K_i\amalg_{D_i} R_i$ with $K_i$ finite cyclic and $R_i$ finite
or $G_i=K_i\amalg_{D_i}M_i\amalg_{E_i}N_i$,
with $K_i$ and $N_i$ finite cyclic and $M_i\subseteq \Phi(G_i)$.
Then, %passing to a cofinal subset of $I$, if necessary,
there exist inverse systems
$\{K_i',\varphi_{ij},J\}$ and $\{D_i'',\varphi_{ij},J\}$,
where $J$ is a cofinal subset of $I$,
such that $D_i''\subseteq K_i'$,
$\varphi_{ij}(K_i')= K_j'$ and $\varphi_{ij}(D_i'')\subseteq D_j''$
where each $K_i'$ (resp. $D_i''$) is a conjugate of $K_i$ (resp. $D_i$)
 by an element of $G_i$.
\end{prop}

\begin{proof}
Using 
\hyperref[t:afpproperties]{Theorem~\ref{t:afpproperties}(a)},
if $G_i$ decomposes in the first manner, then
$\varphi_{ij}$ sends amalgamated free pro-$p$ factors to
amalgamated free pro-$p$ factors up to conjugation;
if $G_i$ decomposes in the second manner, then 
$\varphi_{ij}$ sends cyclic amalgamated free pro-$p$ factors to 
cyclic amalgamated free pro-$p$ factors up to conjugation.
So, in both cases, we may pass to a cofinal subset $J$ of $I$ such that
for all $i$ and $j$ in $J$ with $i\ge j$ we have
$\varphi_{ij}(K_i)\subseteq K_j^{g_j}$, for some $g_j$ in $G_j$.
Then,
since $K_j$ is cyclic, we obtain $\varphi_{ij}(K_i)=K_j^{g_j}$
(in fact, otherwise $\varphi_{ij}(K_i)^{G_j}\neq K_j^{G_j}$ 
would contradict the surjectivity of $\varphi_{ij}$).
Now, selecting any $g_i$ in$\varphi_{ij}^{-1}(g_j)$
and letting $K_i'\!:=K_i^{g_i^{-1}}$,
inductively we obtain the desired inverse system
$\{K_i',\varphi_{ij},J\}$.
Next, letting $D_i'\!:=D_i^{g_i^{-1}}$ we have
$D_i'\subseteq K_i'\cap M_i^{g_i}$; 
then, by
\hyperref[t:afpproperties]{Theorem~\ref{t:afpproperties}(b)},
$\varphi_{ij}(D_i')\subseteq K_j\cap \varphi_{ij}(M_i)
\subseteq {D_j'}^{b_j}$,
for some $b_j$ in $K_j'$.
Choosing any $b_i$ in $\varphi_{ij}^{-1}(b_j)\cap K_i'$
and letting $D_i''\!:={D_i'}^{b_i^{-1}}$ we obtain
the other inverse system $\{D_i'',\varphi_{ij},I\}$.
\end{proof}

%\newpage
The following three simple general lemmas
will also be used in the last section of the paper.

\begin{lemma} %2.13
\label{l:invsys_d-gen}
%\small
Let $G$ be the inverse limit of
an inverse system $\{G_i,\varphi_{ij},I\}$ of
pro-$p$ groups.
Suppose that there is a constant $d$ with $d(G_i)=d$ for all $i$ in $I$.
If $d(G)=d$, then 
there exists $k$ in $I$ such that
$\varphi_{ik}$ is surjective for each $i$.
In particular,
the projections $G\to G_i$ are surjective whenever $i\ge k$.
\end{lemma}

\begin{proof}
For each $i$ in $I$, let $\varphi_i\colon G\to G_i$ be the
projection.
By \textit{reductio ad absurdum},
suppose that
for each $j$ in $I$ there were some non-surjective $\varphi_{ij}$.
Each such $\varphi_{ij}$ induces a map
$G_i/\Phi(G_i)\to G_j/\Phi(G_j)$
which would also be non-surjective
(\textit{cf.} \cite[Prop.~7.7.2]{RZ:10});
in particular, $\varphi_{j}(G)\Phi(G_j)/\Phi(G_j)$ 
would be a proper subgroup of $G_j/\Phi(G_j)$.
Since $G/\Phi(G)\cong \varprojlim_j \varphi_{j}(G)\Phi(G_j)/\Phi(G_j)$,
the quotient $G/\Phi(G)$,
and hence $G$, could be generated by $d-1$ elements;
a contradiction.
For the last assertion of the lemma see
\cite[Prop.~1.1.10]{RZ:10}.
\end{proof}

%\newpage
\begin{lemma}%2.14
\label{l:invsys_ncl}
%\small
Let $G$ be the inverse limit of
an inverse system $\{G_i,\varphi_{ij},I\}$ of profinite groups.
Suppose we have closed subgroups $H_i$ of $G_i$
such that $\varphi_{ij}(H_i)\subseteq H_j$ whenever $i\ge j$.
%Suppose that there is a constant $d$ with $d(G_i)=d$ for all $i\in I$.
Then, for the induced inverse limit $H=\varprojlim H_i$,
%\subseteq G
we have the following equality among normal closures
$H^G=\varprojlim H_i^{G_i}$. 
\end{lemma}

\begin{proof}
Let $P$ denote the cartesian product $\prod_{i\in I}G_i$, and
let $\mathcal{F}$ be the set of all {finite} subsets of $I$.
For any $J$ in $\mathcal F$,
the canonical embedding of $G$ into $P$ and
the projections $\pi_i\colon P\to G_i$
give rise to subgroups 
\[
G_J:=\{g\in P\mid (\forall i,j\in J)\ i\le j \Rightarrow \varphi_{ji}(\pi_j(g))=\pi_i(g)\}
\]
and
\[
H_J:=\{h\in P\mid (\forall i,j\in J)\ \pi_i(h)\in H_i \hbox{ and } i\le j 
\Rightarrow \varphi_{ji}(\pi_j(h))=\pi_i(h)\} \, .
\]
Note that %$G=\bigcap_{J\in\mathcal{F}}G_J$ and
%$H=\bigcap_{J\in\mathcal{F}}H_J$, and clearly
the families $\{G_J\mid J\in \mathcal{F}\}$ and
$\{H_J\mid J\in \mathcal{F}\}$ are
filtered from below
and have intersections $G$ and $H$, respectively.
Now, let 
$\mathcal{N}$ be the set of all clopen normal subgroups of $P$.
For any $N$ in $\mathcal N$,
we have two {\em finite} 
families of subgroups of $P$ filtered from below
 $\{G_JN\mid J\in \mathcal{F}\}$ and
$\{H_JN\mid J\in \mathcal{F}\}$, and so
certainly in $P/N$ we have 
$\bigcap_{J\in \mathcal{F}}(H_JN)^{G_JN}=
(\bigcap_{J\in \mathcal{F}}H_JN)^{\bigcap_{J\in\mathcal{F}}G_JN}$.
The latter equality reads 
\[
\bigcap_{J\in\mathcal{F}}(H_J^{G_J}N)=H^{G}N \, .
\]
Now it follows from
$\bigcap_{N\in\mathcal{N}}\bigcap_{J\in\mathcal{F}}(H_J^{G_J}N)=
\bigcap_{J\in\mathcal{F}}\bigcap_{N\in\mathcal{N}}(H_J^{G_J}N)=
\bigcap_{J\in\mathcal{F}}H_J^{G_J}$
and $\bigcap_{N\in\mathcal{N}}H^{G}N=H^G$ 
that $H^G=\bigcap_{J\in\mathcal{F}}H_J^{G_J}=
\bigcap_{J\in\mathcal{F}}\bigcap_{j\in J}\pi_j^{-1}(H_j^{G_j})$.
It remains to observe that the latter intersection coincides with 
$\varprojlim_iH_i^{G_i}$.
\end{proof}
%check!

%be careful with right action!!!
%\newpage
\begin{lemma}%2.15
\label{l:fp}
%\small
Let $G$ be a pro-$p$ group acting on 
a compact space $\Omega$.
Let $(H_n)_{n\ge 1}$ be a sequence of
normal subgroups of $G$ satisfying
$H_{n+1}\subseteq H_n$ for each $n\ge 1$,
and $\bigcap_{n\ge 1} H_n=\{1\}$.
Define $G_n=G/H_n$
and $\Omega_n=H_n\backslash \Omega$.
Suppose that there exist subgroups $S_n$ of $G_n$ 
such that $\varphi_{nm}(S_n)\subseteq S_m$
where $\varphi_{nm}$ are the canonical maps.
%and let $S=\varprojlim S_n$ be the inverse limit.
Let  $S=\varprojlim S_n$.
If %, with respect to the obvious actions,
$\Omega_n^{S_n}\neq\emptyset$ for 
%infinitely many integers $n$
each $n\ge 1$,
then $\Omega^S\neq\emptyset$.
\end{lemma}

\begin{proof}
Denote by
$\varphi_n\colon G\to G_n$ and
$\pi_n\colon \Omega\to \Omega_n$
the canonical projections.
Then
$\Omega_n^{\varphi_n(S)}\supseteq \Omega_n^{S_n}\neq\emptyset$.
%Therefore
%$Y_n:=\pi_n^{-1}(\Omega_n^{\varphi_n(S)})\neq\emptyset$.
Denoting the non-empty set
$\pi_n^{-1}(\Omega_n^{\varphi_n(S)})$ by $Y_n$,
we have $Y_n=\{x\in \Omega\mid Sx\subseteq H_nx\}$,
so that $Y_{n+1}\subseteq Y_n$; 
by the compactness of $\Omega$ it follows that
$\emptyset\neq \bigcap_{n\ge 1} Y_n\subseteq \Omega^S$.
\end{proof}

Now, to prove 
\hyperref[t:freeorabelian_intro]{Theorem~\ref{t:freeorabelian_intro}}
\textit{we henceforth assume that 
$G:=A\amalg_{C} B$ is a free pro-$p$ product of
$A$ and $B$ with procyclic amalgamating subgroup $C$
satisfying the succeeding conditions:}

\begin{itemize}
\item[\textbf{C1.}]
the centralizer in $G$ of %each non-trivial closed subgroup of 
$C$ is a free abelian pro-$p$ group and contains $C$ as a direct factor.

\item[\textbf{C2.}]
each $2$-generated  subgroup of $A$
and each $2$-generated subgroup of $B$
is either a free pro-$p$ group or a free abelian pro-$p$ group.
\end{itemize}

It is a consequence of the next simple lemma that,
in the presence of \textbf{C2},
condition \textbf{C1} is equivalent to the following condition:

\begin{itemize}
 \item[\textbf{C1$'$.}]
  the centralizer in $G$ of each non-trivial subgroup of $C$ is
  a free abelian pro-$p$ group and contains $C$ as a direct factor.
\end{itemize}

\begin{lemma}
\label{l:cent}
%\small
For each non-trivial subgroup $D$ of $C$ we have
$N_G(D)=C_G(D)=C_G(C)$.
\end{lemma}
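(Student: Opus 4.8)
We may assume $D\ne1$ (for $D=1$ both sides equal $G$); fix $g\in N_G(D)$, the point being to show $g$ centralises $D$. By hypothesis (i), $Z:=C_G(D)$ is a free abelian pro-$p$ group having $C$ as a direct factor; since a direct factor of a torsion-free pro-$p$ group is torsion-free and $C$ is procyclic, this forces $C\cong\mathbb Z_p$, hence also $D\cong\mathbb Z_p$. As $C$ is a direct summand of the abelian group $Z$, all of $Z$ centralises $C$, so $C_G(D)=C_G(C)=Z$, and $C$ is the isolator of $D$ in $Z$. Since $g$ normalises $D$, it normalises $Z=C_G(D)$, and therefore also the isolator $C$ of $D$ in $Z$; thus $g\in N_G(C)$, and it suffices to prove $N_G(C)=C_G(C)$.

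Assume not, and pick $g\in N_G(C)$ not centralising $C$; then $H:=\overline{\langle C,g\rangle}$ is a $2$-generated pro-$p$ group in which $C\cong\mathbb Z_p$ is a non-central normal subgroup, so $H$ is neither a free pro-$p$ group nor a free abelian pro-$p$ group. Work with the standard pro-$p$ tree $T$ of $G=A\amalg_CB$, on which $A=G_v$, $B=G_w$, $C=G_e$ for the edge $e$ joining $v$ and $w$. Were $H$ contained in a conjugate of $A$ or of $B$, hypothesis (ii) — together with the fact that every subgroup of $H$ is at most $2$-generated — would make $H$ free or free abelian pro-$p$; hence $H$ fixes no vertex of $T$. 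But $C=G_e$ fixes $e$, $v$ and $w$, so the fixed subtree $T^C$ is non-empty, and $g$ (normalising $C$) stabilises $T^C$, so $H$ acts on the pro-$p$ tree $T^C$ with $C$ in the kernel. If $g$ had finite order modulo $C$ then $H/C$ would be a finite $p$-group acting on $T^C$ and would fix a vertex $u$ (Theorem \ref{t:trees1}(d)), forcing $H\le G_u$ — a contradiction; so $g$ has infinite order modulo $C$ and $H\cong\mathbb Z_p\rtimes\mathbb Z_p$.

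Ruling out this last situation is where I expect the real work to be. The plan is to bring in the remaining structure of the $N_G(C)$-action on $T^C$: for each vertex $u\in T^C$ one has $C\le G_u$ and the stabiliser of $u$ in $N_G(C)$ is $N_{G_u}(C)$, which, by (ii) applied to the conjugate of $A$ or $B$ equal to $G_u$ (a $2$-generated subgroup of it containing the procyclic normal subgroup $C$ must be abelian), equals $C_{G_u}(C)\le Z$; hence the subgroup $\widetilde N$ generated by all point stabilisers satisfies $C\le\widetilde N\le Z$, and by Theorem \ref{t:trees1}(b) the metabelian group $N_G(C)/\widetilde N$ is free pro-$p$, so it is $\cong\mathbb Z_p$ or trivial, and likewise $Z/\widetilde N$. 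I would then finish by analysing the subtree $T_C\subseteq T^C$ of those edges of $T$ whose stabiliser is exactly $C$ — on which $C$ acts trivially and $N_G(C)/C$ acts with trivial edge stabilisers — using Proposition \ref{p-freeedgeaction} to express $N_G(C)/C$ as a free pro-$p$ product of the abelian groups $C_{G_u}(C)/C$ together with a free pro-$p$ group, and then showing that this product must collapse to a single abelian factor, since otherwise $N_G(C)$ would contain a $2$-generated subgroup contradicting (ii) through the tree action; combined with $N_G(C)/C_G(C)\hookrightarrow{\rm Aut}(C)\cong\mathbb Z_p^{\times}$ this would give $N_G(C)=C_G(C)$, contradicting the choice of $g$.
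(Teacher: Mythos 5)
Your opening reductions are sound, and the local observation you isolate --- that for a vertex $u$ of $T^C$ any element of $N_{G_u}(C)$ generates together with $C$ a $2$-generated solvable subgroup of a conjugate of $A$ or $B$, which by hypothesis (ii) must be free or free abelian and hence abelian, so that $N_{G_u}(C)=C_{G_u}(C)$ --- is exactly the heart of the paper's argument. But your proof stops where the real difficulty begins, and you say so yourself: everything from ``Ruling out this last situation\dots'' onward is a plan, not a proof. What is missing is the global step: one must show that $N_G(C)$ (resp.\ $N_G(D)$) is actually \emph{generated} by the normalizers taken inside vertex stabilizers, with no room left for a non-centralizing element in the free quotient of Theorem~\ref{t:trees1}(b). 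The paper gets this in one stroke from the pro-$p$ version of \cite[Cor.~2.7(ii)]{RZ:96}, which yields the decomposition $N_G(D)=N_A(D)\amalg_C N_B(D)$; after that the lemma is two lines. Your sketch is an attempt to re-derive that decomposition by hand from the tree action, and that is where it does not close.

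Concretely, three steps of your plan are unjustified. First, the set $T_C$ of edges whose stabilizer is \emph{exactly} $C$ need not be a closed, $N_G(C)$-invariant pro-$p$ subtree, and Proposition~\ref{p-freeedgeaction} additionally requires a continuous section $V(T)/G\to V(T)$ which you do not produce. Second, on $T^C$ itself the edge stabilizers for the $N_G(C)/C$-action need not be trivial: an edge stabilizer is some conjugate $C^x$ which may properly contain $C$, so the free-product decomposition you want is not available there either. Third, and most seriously, the assertion that a nontrivial free product decomposition of $N_G(C)/C$ would produce ``a $2$-generated subgroup contradicting (ii)'' is circular: hypothesis (ii) only constrains subgroups of $A$ and $B$, and the statement that $2$-generated subgroups of $G$ itself are free or free abelian is Theorem~\ref{t:freeorabelian}, whose proof relies on the present lemma. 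Even granting your intermediate conclusions ($\widetilde N\le Z$ and $N_G(C)/\widetilde N$ procyclic), the case $N_G(C)/\widetilde N\cong{\mathbb Z}_p$ with the generator acting nontrivially on $C$ is not excluded by anything you write. To finish along your lines you would need to prove, or cite, the normalizer decomposition $N_G(D)=N_A(D)\amalg_C N_B(D)$; that is the one ingredient your proposal is missing, and it is the one the paper supplies.
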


\begin{proof}
We claim that $N_A(D)=C_A(D)$ and $N_B(D)=C_B(D)$ hold.
Indeed, 
first, suppose that $N_A(D)\ne C_A(D)$.
Then, there exists an element $x$ in $N_A(D) \!-\! C_A(D)$
making the $2$-generated subgroup
${\langle x,D\rangle}$ 
metabelian but non-abelian;
a contradiction with \textbf{C2}.
Second, if there exists an element $y$
in $C_A(D) \!-\! C_A(C)$, 
then $\langle y,C\rangle$
is a non-abelian free pro-$p$ group,
by \textbf{C2}.
Therefore, $\langle y,C\rangle=\langle y\rangle\amalg C$.
Since $y\in C_A(D)$,  
from
\hyperref[t:afpproperties]{Theorem~\ref{t:afpproperties}(b)}
we obtain $D\subseteq C\cap C^y=\{1\}$;
a contradiction.
Our initial claim is proved.

Finally,
by the pro-$p$ version of \cite[Cor.~2.7(ii)]{RZ:96},
we have
\[
N_{G}(D)=N_{A}(D)\amalg_{C}N_{B}(D)\, .
\]
So
$N_G(D)={\langle N_{A}(D),N_{B}(D)\rangle}
={\langle C_{A}(C),C_{B}(C)\rangle}\subseteq C_G(C)$,
as desired.
\end{proof}

Using 
\hyperref[t:treeacting_intro]{Theorem~\ref{t:treeacting_intro}}
we now prove
\hyperref[t:freeorabelian_intro]{Theorem~\ref{t:freeorabelian_intro}}.

\begin{proof}[Proof of \protect{\upshape
\hyperref[t:freeorabelian_intro]{Theorem~\ref{t:freeorabelian_intro}}}]
Let $T$ be the standard pro-$p$ tree on which $G$ acts
(\textit{cf.} \cite[Sec.~4]{RZ:00a})
and let $L$ be a $2$-generated subgroup of $G$.
It follows from the definition of $T$ that if
$L$ stabilizes a vertex of $T$, then $L$ is up to conjugation
in one of the amalgamated free factors of $G$;
hence $L$ is either free pro-$p$ or
free abelian pro-$p$, by hypothesis~\textbf{C2}.
So let us assume that $L$ fixes no vertex of $T$.

Since $L$ is finitely generated, we have 
$L\cong \varprojlim L/U_n$ where
$\{U_n\mid n\in{\mathbb N}\}$
is a sequence of open normal subgroups of $L$ with 
$U_{n+1}\subseteq U_n$ for each $n\ge 1$,
and $\bigcap U_n =\{1\}$.
Recall our notation $\widetilde{U_n}$
for the closed subgroup of $U_n$ generated by all vertex
stabilizers with respect to the action of $U_n$ on $T$.

Defining $L_n\!:=L/\widetilde{U_n}$
we have that each $L_n$ acts virtually freely 
on the pro-$p$ tree $\widetilde{U_n}\backslash T$.
%and furthermore, we may and will henceforth consider
%an infinite set $I$ of integers $n$ such that each $L_n$ is infinite.
Indeed, 
the quotient graph $\widetilde{U_n}\backslash T$ is a pro-$p$ tree
by 
\hyperref[t:trees1]{Theorem~\ref{t:trees1}(a)},
and each $U_n/\widetilde{U_n}$ is a free pro-$p$ group,
by
\hyperref[t:trees1]{Theorem~\ref{t:trees1}(b)}.
Furthermore, if $U_n=\widetilde{U_n}$ for almost every $n$,
then $L_n$ would almost always be a finite group acting on $\widetilde{U_n}\backslash T$;
thus, by
\hyperref[t:trees1]{Theorem \ref{t:trees1}(d)},
we could apply
\hyperref[l:fp]{Lemma~\ref{l:fp}}
with $\Omega\!:=V(T)$ and $S_n\!:=L_n$,
to obtain a vertex of $T$ fixed by %$L$.
$L\cong \varprojlim \{L_n, \varphi_{nm}, I\}$
where each $\varphi_{nm}$ is the canonical map.
This contradicts the assumption that $L$ does {\em not} fix any vertex.
Thus we must have $U_n\neq \widetilde{U_n}$, 
\textit{i.e.}, $L_n$ is infinite,
for almost every $n$.

In virtue of
\hyperref[t:treeacting_intro]{Theorem~\ref{t:treeacting_intro}(b)},
we have that each
$L_n$ %=\Pi_1(\mathcal{L}_n,\Gamma_n)$
is the fundamental pro-$p$ group of
a finite connected graph $\Gamma_n$
of finite $p$-groups whose edge and vertex groups
are stabilizers of certain edges and vertices of
$\widetilde{U_n}\backslash T$.

Now, since $L/\widetilde{L}$ is a free pro-$p$ group
of rank at most $2$,
we need to only examine the cases
$L=\widetilde{L}$ and $L/\widetilde{L}\cong {\mathbb Z}_p$;
in the remaining case, when $d(L/\widetilde L)=2$,
$L$ is itself free pro-$p$ of rank 2 -- by the Hopfian property
(\textit{cf.} \cite[Prop.~2.5.2]{RZ:10}).
We can assume that $\widetilde{L}\neq \{1\}$, otherwise
there is nothing to prove.

%\newpage
\medskip
\noindent Case 1.
\textit{$L=\widetilde{L}$.}
\medskip

Note that each finite connected graph $\Gamma_n$ is a tree.
Otherwise, there is an edge $e_n$ in $\Gamma_n$ such that
$L_n=\operatorname{HNN}(P_n,G(e_n),t_n)$ for $G(e_n)$ finite.
But then there is a homomorphism from $L_n$ onto $\mathbb{Z}_p$
contradicting 
$\widetilde{L}/\widetilde{U_n}=
{\langle \operatorname{tor}(L_n)\rangle}$
(\textit{cf.} 
\hyperref[t:trees1]{Theorem~\ref{t:trees1}(d)}).

Then by
\hyperref[p:2gen]{Proposition~\ref{p:2gen}(c)},
each group $L_n$ has a non-fictitious decomposition
$L_n=K_n\amalg_{D_n} W_n$
where $K_n$ are finite cyclic groups.
In light of 
\hyperref[p:treeprod]{Proposition~\ref{p:treeprod}},
and following its notation,
we have inverse systems
$\{K_n',\varphi_{nm}\}$ and $\{D_n'',\varphi_{nm}\}$
of groups conjugate to $K_n$ and $D_n$. 
Consider the two procyclic groups
$K\!:=\varprojlim K_n'$ and $D \!:=\varprojlim D_n''$.

We claim that $D=\{1\}$.
Note that since each $D_n$ is an edge stabilizer
with respect to the $L_n$-action, we have
$D=L\cap C^g$ for some $g\in G$.
Suppose on the contrary that $D\neq \{1\}$. 
Condition \textbf{C1$'$} says that
$C^g$ is a direct factor of $C_G(D)$,
hence $D$ is a direct factor of $C_L(D)$,
because $C_L(D) = L\cap C_G(D)$.
Since the procyclic group $K$ contains $D$,
it follows that $D=K$. 
Now, the projection $K\to K_{n_0}'$ is
surjective for some sufficiently large $n_0$, by
\hyperref[l:invsys_d-gen]{Lemma~\ref{l:invsys_d-gen}}.
Hence $D_{n_0}''=K_{n_0}'$; a contradiction to the
non-fictitious decomposition of $L_{n_0}$.
The claim is proved.

By
\hyperref[l:invsys_ncl]{Lemma~\ref{l:invsys_ncl}},
we have $\varprojlim {D_n''}^{L_n} =\{1\}$;
hence $L\cong \varprojlim L_{n}/{D_n''}^{L_n}$.
Now, if each $L_n/D_n''^{L_n}$ is procyclic, then
$L$ is procyclic.
So, we assume that each
$L_{n}/{D_n''}^{L_n}$ is $2$-generated.
Then, writing $L_n= K_n'\amalg_{D_n''} W_n'$
we have
$L\cong \varprojlim (K_n'/D_n'' \amalg W_n'/{D_n''}^{W_n'})$.
Since $K_n'/D_n''$ is $1$-generated,
so is $W_n'/{D_n''}^{W_n'}$.
Therefore $L\cong \mathbb{Z}_p\amalg \mathbb{Z}_p\,$,
by
\hyperref[p:decompfp]{Proposition~\ref{p:decompfp}}.
Our proof is finished for {\it Case 1}.

%\newpage
\medskip
\noindent Case 2.
\textit{$L/\widetilde{L}\cong {\mathbb Z}_p$.}
\medskip

For each $n$ we have 
$L_n/(\widetilde{L}/\widetilde{U_n}) \cong
L/\widetilde{L}\cong \mathbb{Z}_p$
%${\mathbb Z}_p\cong L/\widetilde L\cong L_n/(\widetilde L/\widetilde U_n)$
and therefore $\Gamma_n$ cannot be a tree.
Then we select a suitable edge $e_n$ of $\Gamma_n$,
set $\Delta_n:=\Gamma_n\,-\,\{e_n\}$, and present
$L_n=\operatorname{HNN}(K_n,D_n,t_n)$
where $D_n$ is the finite cyclic edge group of $e_n$
and $K_n$ is the fundamental pro-$p$ group
of graph of groups restricted to $\Delta_n$.
%with cyclic edge group $D_n$ of $e_n$ and
%$K_n=\Pi_1({\mathcal{G}_n}_{|\Delta_n},\Delta_n)$.

Since $\widetilde{L}/\widetilde{U_n}$ is generated by torsion,
as a consequence of
\hyperref[t:hnnproperties]{Theorem \ref{t:hnnproperties}(a)},
it follows that
$\widetilde{L}/\widetilde{U_n}$ is contained in ${K_n}^{L_n}$;
so, ${\langle \operatorname{tor}(L_n) \rangle}={K_n}^{L_n}$.
By \cite[Prop.~1.7(ii)]{Zalesskii:04},
%Let G be any virtually free pro-p group and 
%N a normal subgroup of G generated by torsion elements.
%Then G/<tor(G)> is free pro-p.
$K_n/{\langle \operatorname{tor}(K_n)\rangle}$
is a free pro-$p$ group,
whence ${\langle \operatorname{tor}(L_n) \rangle}$
has trivial image in the quotient
$\operatorname{HNN}
(K_n/{\langle \operatorname{tor}(K_n)\rangle},\{1\}, t_n)$
of $L_n$.
Thus $K_n={\langle \operatorname{tor}(K_n)\rangle}$.
Since $K_n$ acts on the pro-$p$ tree $\widetilde{U_n}\backslash T$
we have $K_n=\widetilde{K_n}$
(\textit{cf.} 
\hyperref[t:trees1]{Theorem~\ref{t:trees1}(d))},
so in particular, $\Delta_n$ must be a tree.

Passing now to a cofinal subset of $\mathbb N$,
if necessary, we may and do assume that for all $n$ either
$\Delta_n$ is a single vertex or $\Delta_n$ contains an edge.
We discuss the two subcases.

%\newpage
\medskip
\noindent Subcase 2($\alpha$).
\textit{For each $n$, the tree $\Delta_n$ is a single vertex.}
\medskip

Each $K_n$ is a finite $p$-group.
In the analysis of this subcase we make heavy use of
\hyperref[p:decomphnn]{Proposition~\ref{p:decomphnn}}
and its notation.

By
\hyperref[p:decomphnn]{Proposition~\ref{p:decomphnn}(a)},
we have an inverse system of
conjugates $K_n'$ of subgroups of $K_n$.
Passing again to a cofinal subset of $\mathbb N$, if necessary,
and making use of
\hyperref[p:2gen]{Proposition~\ref{p:2gen}(b)}
we may and do assume for each $n$ that,
either $K_n'$ is cyclic
or $d(K_n')=2$.
Thus, either
$K\!:=\varprojlim K_n'$ is procyclic
or $d(K)=2$.

If $K$ is procyclic, then for every $m$ there exists $n>m$
such that $\varphi_{nm}(K_n')$ is cyclic and so
$\varphi_{nm}(D_n')=\varphi_{nm}(D_n'^{t_n'})$,
where the $D_n'$ (resp. $t_n'$) are conjugates of 
$D_n$ and (resp. $t_n$)
according to 
\hyperref[p:decomphnn]{Proposition~\ref{p:decomphnn}}.
Hence $\varphi_{nm}(t_n')$ normalizes $\varphi_{nm}(D_n')$
and so $L_m=N_{L_m}(\varphi_{nm}(D_n'))$.
Since $L=\varprojlim L_m$ it follows that
$D\!:=\varprojlim D_m'$ is normal in $L$.
Recalling that  $E(T)$ is the compact set of the standard graph $T$  on which $L$ acts, setting in
\hyperref[l:fp]{Lemma~\ref{l:fp}}
$\Omega\!:=E(T)$ and $S_n\!:=D_n'$
we deduce the existence of an edge  $e\in E(T)$ with $D\subseteq G_e$.
Therefore $D^g\subseteq C$ for some $g\in G$. 
If $D\neq \{1\}$, making use of
\hyperref[l:cent]{Lemma~\ref{l:cent}},
we find that
%$L\cong {\mathbb Z}_p\times {\mathbb Z}_p$
$L$ is a free abelian pro-$p$ group
by %hypothesis~(i), 
condition \textbf{C1$'$},
as needed.

%Next assume that $D=1$.
If, on the other hand, $D=\{1\}$,
it follows from
\hyperref[l:invsys_ncl]{Lemma~\ref{l:invsys_ncl}}
that $\varprojlim D_m^{L_m}=1$ and
so $L=\varprojlim L_m/D_m^{L_m}$.
Observing that
$L_m/D_m^{L_m}
=K_m/(K_m\cap D_m^{L_m})\amalg \langle t_m\rangle$,
\hyperref[p:decompfp]{Proposition~\ref{p:decompfp}}
implies that
%$L\cong {\mathbb Z}_p\amalg {\mathbb Z}_p$,
$L$ is a free pro-$p$ group
whence the result, since $K$ is procyclic.

For finishing Subcase 2($\alpha)$
we assume that $d(K)=2$,
thus $d(K_n')=2$ for each $n$.
Then, setting $d:=2$ in
\hyperref[l:invsys_d-gen]{Lemma~\ref{l:invsys_d-gen}},
we are under the conditions of
\hyperref[p:decomphnn]{Proposition~\ref{p:decomphnn}(b)}.
Therefore,
we consider the inverse limit
of the inverse system of conjugates $D_n''$ or $D_n''^{t_n''}$
of the finite cyclic groups $D_n$.
Since $d(L)\le 2$,
by the argument in the previous paragraph,
we may and do assume that such inverse limit is non-trivial.
Then, passing to a cofinal subset, setting $d=1$ in
\hyperref[l:invsys_d-gen]{Lemma~\ref{l:invsys_d-gen}}
allows us to apply
\hyperref[p:decomphnn]{Proposition~\ref{p:decomphnn}(c)},
and obtain that $L= \operatorname{HNN}(K,D,t)$,
where $D\!:=\varprojlim D_n''$.

Now, by
\hyperref[l:fp]{Lemma~\ref{l:fp}},
$K$ stabilizes a vertex of $T$; 
so $K$ is, up to conjugation, contained in either $A$ or $B$ and
therefore it is a free pro-$p$ group or a free abelian pro-$p$ group,
by hypothesis~\textbf{C2}.
Note that,
since $E(T)$ is a compact space on which $L$ acts, setting in
\hyperref[l:fp]{Lemma~\ref{l:fp}}
$\Omega\!:=E(T)$ and $S_n\!:=D_n$,
we find $e\in E(T)$ with $D\subseteq G_e$.
Hence $D^g\subseteq C$ for suitable $g\in G$.
By condition
\textbf{C1$'$},
$C_L(D)$ is abelian
and we apply 
\hyperref[l:2gen_hnn]{Lemma~\ref{l:2gen_hnn}}
to settle Subcase 2($\alpha$).

%\newpage
\medskip
\noindent Subcase 2($\beta$).
\textit{For each $n$, the tree $\Delta_n$ contains an edge.}
\medskip

%\newpage
By
\hyperref[p:2gen]{Proposition~\ref{p:2gen}(c)},
each group $K_n$ has a non-fictitious decomposition
$K_n=X_n\amalg_{Z_n}W_n$ where
$X_n$ are finite cyclic groups.
Moreover, proceeding as in the proof of
\hyperref[p:treeprod]{Proposition~\ref{p:treeprod}}
(but using here that $\varphi_{nm}(L_n)=L_m$),
there exists an inverse system $\{X_n',\varphi_{nm}\}$ 
of conjugates of $X_n$ in $K_n$;
and we consider the procyclic subgroup $X=\varprojlim X_n'$.
We have two alternatives: $X$ is trivial, or not.

Suppose first that $X$ is trivial. %$X=1$.
Then, from
\hyperref[l:invsys_ncl]{Lemma~\ref{l:invsys_ncl}},
it follows that
$\varprojlim X_m'^{L_m}=\{1\}$ and
so $L\cong\varprojlim L_m/X_m'^{L_m}$.
Now, we have that
$L_m/X_m'^{L_m}$ is an $\operatorname{HNN}$-extension
with %cyclic associated subgroups and 
base group a finite cyclic quotient of $W_m$.
To see this observe that $K_m/X_m^{K_m}$ is finite cyclic
(since $K_m=\langle \operatorname{tor}(K_m)\rangle$
and $d(K_m)=2$) and so
$L_m/X_m'^{L_m}$ is an $\operatorname{HNN}$-extension
$\operatorname{HNN}(\bar K_m,\bar D_m,t_m)$ 
of a finite cyclic quotient $\bar K_m$ of $K_m$ where
the images of $D_m$ and $D_m^{t_m}$ coincide.
%Suppose that {H} acts on a simply connected graph {Sigma}, ​​so that the quotient {Gamma = H \ Sigma} is finite. Then {H} is the fundamental group of a graph of groups {(\ mathcal {G}, Gamma)}, where for each {m} in {Gamma}, {\mathcal{G} (m)} is isomorphic to the stabilizer in {H} of some pre-image of {m} in {Sigma}
Hence, 
from Subcase 2($\alpha$),
$L$ is a free pro-$p$ group or a free abelian pro-$p$ group.

Finally, suppose $X$ is non-trivial.
By
\hyperref[l:invsys_d-gen]{Lemma~\ref{l:invsys_d-gen}}
with $d=1$, we obtain that $\varphi_{nm}(X_n')=X_m'$.
So, by
\hyperref[p:treeprod]{Proposition~\ref{p:treeprod}},
there exists an inverse system $\{Z_n'',\varphi_{nm}\}$ 
of conjugates of $Z_n$ in $X_n'$;
and we consider the procyclic group $Z=\varprojlim Z_n''$.
We have $Z\neq X$, otherwise by
\hyperref[l:invsys_d-gen]{Lemma~\ref{l:invsys_d-gen}}
we could find $n$ with $Z_n''=X_n'$;
contradicting the non-fictitious decomposition
$K_n=X_n\amalg_{Z_n}W_n$.
Setting 
$\Omega\!:=E(T)$ and $S_n\!:=Z_n''$ in
\hyperref[l:fp]{Lemma~\ref{l:fp}},
we obtain $e\in E(T)$ with $Z\subseteq L_e$.
Hence there exists $g\in G$ with $Z^g\subseteq C$.
Now, since $Z\neq X$, 
condition
\textbf{C1$'$}
implies $Z=\{1\}$.
%explain?
Therefore
$L\cong \varprojlim L_n/Z_n^{L_n}$.

Now, the group $L_n/Z_n^{L_n}$ can be seen as 
the quotient group
$K_n/{Z_n}^{K_n}\amalg \langle t_n\rangle$ modulo
a single relation
which comes from the relation between the associated subgroups
of the $\operatorname{HNN}$-extension.
Precisely,
let us denote by ``bar'' %the image of a subgroup of $K_n$ in
passing to the quotient $K_n/{Z_n}^{K_n}$, and
write
$\overline{K_n}=A_n\amalg B_n$
with
$A_n\cong X_n/{Z_n}^{X_n}=X_n/{Z_n}$
and $B_n\cong W_n/{Z_n}^{W_n}$.
The quotient we analyze is
$(\overline{K_n}\amalg \langle t_n\rangle)
/\langle t_n^{-1}\overline{d_{0n}}t_n\overline{d_{1n}}^{-1}\rangle
^{\overline{K_n}\amalg \langle t_n\rangle}$,
where $d_{0n}$ is a generator of ${D_n}$ and
$d_{1n}$ is the corresponding generator of ${{D_n}^{\!\!t_n}}$.

By
\hyperref[t:afpproperties]{Theorem~\ref{t:afpproperties}(a)},
the finite cyclic groups
$\overline{D_n}$ and $\overline{{D_n}^{\!\!t_n}}$
are, up to conjugation by an element of $\overline{K_n}$,
contained in $A_n$ or $B_n$.

%\newpage
Suppose that,
up to conjugation, both
$\overline{D_n}$ and $\overline{{D_n}^{\!\!t_n}}$
do not coincide with the free factors containing them.
Then both are contained in the Frattini $\Phi(\overline{K_n})$
and hence
$\langle t_n^{-1}\overline{d_{0n}}t_n\overline{d_{1n}}^{-1}\rangle
^{\overline{K_n}\amalg \langle t_n\rangle}$
is contained in $\Phi(\overline{K_n}\amalg \langle t_n\rangle)$.
So, 
$\overline{K_n}/\Phi(\overline{K_n}) \amalg 
\langle t_n\rangle/\Phi(\langle t_n\rangle)$
is a quotient of the group
$(\overline{K_n}\amalg \langle t_n\rangle)
/\langle t_n^{-1}\overline{d_{0n}}t_n\overline{d_{1n}}^{-1}\rangle
^{\overline{K_n}\amalg \langle t_n\rangle}$.
Since $d(L_n)\le 2$, we must have $d(\overline{K_n})=1$
and $B_n$ is trivial; hence $W_n=Z_n$.
This is a contradiction to the non-fictitious decomposition of
$K_n=X_n\amalg_{Z_n}W_n$.

Otherwise, without loss of generality,
we suppose that $\overline{D_n}$ coincides, 
up to conjugation, with $A_n$.
Changing $A_n$ by a conjugate, if necessary,
we have $\overline{D_n}=A_n$.
On the other hand, there exists an element $y_n$ in $\overline{K_n}$
such that $\overline{d_{1n}}=w_n^{y_n^{-1}}$
for some $w_n$ belonging either to $\overline{D_n}$ or to $B_n$.
Letting $z_n:=t_ny_n$ we have
\[
\begin{array}{ll} \!\!
(\overline{K_n}\amalg \langle t_n\rangle)
/\langle t_n^{-1}\overline{d_{0n}}t_n\overline{d_{1n}}^{-1}\rangle
^{\overline{K_n}\amalg \langle t_n\rangle} \!\!\!\!
&= (\overline{D_n}\amalg B_n \amalg \langle t_n\rangle)
/\langle \overline{d_{0n}}(t_n\overline{d_{1n}}^{-1}t_n^{-1})\rangle
^{\small\overline{K_n}\amalg \langle t_n\rangle}\\
&\cong (\overline{D_n}\amalg B_n \amalg \langle z_n\rangle)
/\langle \overline{\small d_{0n}}z_nw_n^{-1}z_n^{-1}\rangle
^{\overline{K_n}\amalg \langle z_n\rangle}.\end{array}
\]

If $w_n\in B_n$, 
then the desired quotient
is isomorphic to the free pro-$p$ product of
a quotient of $B_n$ and $\langle z_n\rangle$,
by eliminating the generator $\overline{d_{0n}}$ of $\overline{D_n}$.
Since $d(L_n)\le 2$, it follows from
\hyperref[p:decompfp]{Proposition~\ref{p:decompfp}(a)}
that $L$ is a free pro-$p$ group.

Suppose now that $w_n$ is a power of $\overline{d_{0n}}$.
Then $z_n$ normalizes the finite group $A_n$ in 
$(\overline{D_n}\amalg B_n \amalg \langle z_n\rangle)
/\langle \overline{d_{0n}}z_nw_n^{-1}z_n^{-1}\rangle
^{\overline{K_n}\amalg \langle z_n\rangle}$.
Let ``prime'' denote passing to Frattini quotients,
we consider the following image of the desired quotient
${(\overline{K_n}'\times \langle z_n\rangle')}/ \\ %%%linebreaking!!!
{\langle z_n'^{-1}\overline{d_{0n}}'z_n' w_n'^{-1}\rangle
^{\overline{K_n}'\times \langle z_n\rangle}}$.
Note that $w_n'=\overline{d_{0n}}'$ in this image.
Indeed, since $z_n$ acts by conjugation on the finite group $A_n$
we have that $z_n$ acts trivially
on the group $A_n'=A_n/\Phi(A_n)$.
Thus the considered image is simply
$\overline{K_n}'\times \langle z_n\rangle'$.
Since $d(L_n)\le 2$, $B_n$ must be trivial.
This contradicts the non-fictitious decomposition of
$K_n=X_n\amalg_{Z_n}W_n$.

The proof of the theorem is concluded.
\end{proof}

%\newpage
\begin{cor}\label{c:2-free}
Suppose that 
%%neither $A$ nor $B$ contains a
%%$2$-generated non-procyclic abelian subgroup.
%every $2$-generated abelian subgroup of $A$ and 
%every $2$-generated abelian subgroup of $B$ is procyclic.
every $2$-generated subgroup of $A$ and 
every $2$-generated subgroup of $B$ is 
a free pro-$p$ group.
Then every $2$-generated subgroup %$L$ 
of $G$
is also a free pro-$p$ group.
%Then so is any $2$-generated pro-$p$ subgroup of $G$.
\end{cor}

\begin{proof}
Suppose that $L$ is a free abelian pro-$p$ group of rank $2$
contained in $G$.
Let $T$ be the standard pro-$p$ tree on which $G$ acts.

In virtue of
\cite[Thm.~3.18]{RZ:00a}
either $L$ stabilizes a vertex or there
is an edge $e$ of $T$ such that $L/L_e\cong{\mathbb Z}_p$.
But $L$ cannot stabilize a vertex,
else it would be conjugate to 
a subgroup of one of the amalgamated free factors of $G$,
contradicting the hypothesis.

Therefore $L/L_e\cong {\mathbb Z}_p$ for some edge $e$ of $T$.
Since $d(L)=2$ we must have $L_e \not= \{1\}$.
Conjugating $L$ by some element of $G$
we may and do assume that $L_e$ is contained in $C$.
Now, we have $N_G(L_e)={\langle C_{A}(C), C_{B}(C)\rangle}$,
from
%the last paragraph of the proof of
\hyperref[l:cent]{Lemma~\ref{l:cent}};
and the hypothesis of the corollary together with 
condition \textbf{C1}
imply $C_A(C)=C=C_B(C)$.
Therefore $L=N_L(L_e)\cong \mathbb{Z}_p$; another contradiction.

So, by
\hyperref[t:freeorabelian_intro]{Theorem~\ref{t:freeorabelian_intro}},
all $2$-generated subgroups of $G$ must be free pro-$p$.
\end{proof}

%\newpage
As mentioned in the
\hyperref[s:intro]{Introduction}, our
\hyperref[t:freeorabelian_intro]{Theorem~\ref{t:freeorabelian_intro}}
is a pro-$p$ version of
\cite[Thm.~2]{Baumslag:62}
and, of 
\cite[p.~601]{BBaumslag:68}
for free products with cyclic amalgamations 
whose amalgamating subgroups are malnormal in both factors; 
it also generalizes \cite[Thm.~7.3]{KZ:11}.

Note that
although our last results do not deal with trivial amalgamations,
in virtue of pro-$p$ versions of the
Kurosh subgroup theorem
and the
Grushko-Neumann theorem
(\textit{e.g.}, 
\cite[Thm.~4.3]{Melnikov:90}  
and \cite[Thm.~9.1.15]{RZ:10}),
the corresponding results of
\hyperref[t:freeorabelian_intro]{Theorem~\ref{t:freeorabelian_intro}}
and
\hyperref[c:2-free]{Corollary~\ref{c:2-free}}
also hold for free pro-$p$ products.

%\newpage
We end this section with a simple example
not covered by previous results in the literature.

\begin{example} \label{ex:demushkin}
\upshape
Let $D$ be a non-soluble Demushkin group
(\textit{e.g.}, the pro-$p$ completion of a surface group of genus $\ge 2$),
and let $F$ be a non-abelian free pro-$p$ group of finite rank.
If $G=D\amalg_C F$, where
$C$ is a maximal procyclic subgroup in $D$ and $F$,
then by
\hyperref[c:2-free]{Corollary~\ref{c:2-free}}
any $2$-generated subgroup of $G$ is a free pro-$p$ group.
In fact,
any $2$-generated subgroup of $D$ is a free pro-$p$ group,
and $N_G(C)=N_D(C)\amalg_C N_F(C)=C\amalg_C C=C$
(\textit{cf.} \cite[Ex.~5(b) and Ex.~6, p.~41]{Serre:94}). %p. 44 em 1997
\end{example}

%%%%%%%%%%%%%%%%%%%%%%%%%%%%%%
%\newpage
\phantomsection
\section*{Acknowledgements}
\label{s:acknowl}
%\addcontentsline{toc}{section}{Acknowledgements}
%%%%%%%%%%%%%%%%%%%%%%%%%%%%%%

The second and third named authors are grateful for the
partial financial support from CNPq and CAPES.

%%%%%%%%%%%%%%%%%%%%%%%%%%%%%%
%\newpage

\end{document}